\theoremstyle{plain}
\newtheorem{theorem}{Theorem}
\numberwithin{equation}{section}
\def\mydate{\number\year-\ifnum\month<10{0}\fi\number\month-\ifnum\day<10{0}\fi\number\day}
\newcommand{\dy}{\partial}
\newcommand{\ddt}[1]{\frac{\mathrm{d}{#1}}{\mathrm{d}{t}}}
\newcommand{\sfrac}[2]{{\textstyle\frac{#1}{#2}}}
\newcommand{\tssum}{{\textstyle\sum}}
\newcommand{\Zahl}{\mathbb{Z}}
\newcommand{\Real}{\mathbb{R}}
\newcommand{\Comp}{\mathbb{C}}
\newcommand{\Oh}{{\sf O}}
\newcommand{\ex}{\mathrm{e}}
\newcommand{\im}{\mathrm{i}}
\newcommand{\eps}{\varepsilon}
\newcommand{\vfi}{\varphi}
\newcommand{\vtt}{\vartheta}
\newcommand{\ups}{\upsilon}
\newcommand{\gb}{\nabla}
\newcommand{\aand}{\quad\textrm{and}\quad}
\newcommand{\sump}[1]{\mathop{\smash{\mathop{{\sum}_{#1}'}}{\vphantom\sum}}}
\newcommand{\Iff}{\>\Leftrightarrow\>}
\newcommand{\hbeta}{{\hat\beta}}
\newcommand{\Dom}{D}
\newcommand{\kpg}{\kappa_g}
\newcommand{\Proj}{{\sf P}}
\newcommand{\Zp}{\mathbb{Z}^3\backslash\{0\}}
\newcommand{\tht}{\theta}
\newcommand{\ds}{\;\mathrm{d}s}
\newcommand{\Unif}{\mathcal{U}}
\newcommand{\CE}{\mathcal{E}}
\newcommand{\CR}{\mathcal{R}}
\newcommand{\mxr}{\Xi}
\newcommand{\ppt}{\varsigma}
\newcommand{\ppz}{\varrho}
\newcommand{\EV}{{\sf E}}
\newcommand{\Var}{{\sf var}}
\newcommand{\rmd}{\mathcal{E}}
\newcommand{\Pkk}{\Proj_{\kappa,2\kappa}}
\newcommand{\dr}{\;\mathrm{d}r}
\newcommand{\drho}{\;\mathrm{d}\rho}
\newcommand{\dvfi}{\;\mathrm{d}\vfi}
\newcommand{\dphi}{\;\mathrm{d}\phi}
\newcommand{\ggfo}[1]{\|{#1}\|_{l_1}^{}}
\newcommand{\ggfosq}[1]{\|{#1}\|_{l_1}^2}
\newcommand{\gs}{{\scriptscriptstyle{\gtrsim}}}
\begin{document}

\title[Batchelor--Howells--Townsend spectrum in 3d]
{The Batchelor--Howells--Townsend spectrum:\\three-dimensional case}
\author{M. S. Jolly$^{1}$}
\address{$^1$Mathematics Department\\
Indiana University\\ Bloomington, IN 47405, United States}
\author{D. Wirosoetisno$^{2}$}
\address{$^2$Mathematical Sciences\\
Durham University\\ Durham, United Kingdom\ \ DH1 3LE}
\email[M. S. Jolly]{msjolly@indiana.edu}
\email[D. Wirosoetisno]{djoko.wirosoetisno@durham.ac.uk}

\thanks{This work was supported in part by NSF grant number DMS-1818754, and the Leverhulme Trust grant VP1-2015-036. }

\subjclass[2010]{35Q30, % pde, nse
47A55, % operator theory, linear, perturbation
60G99, % stochastic processes
76F02} % turbulence, fundamentals
\keywords{passive tracers, random velocity, turbulence, Batchelor--Howells--Townsend spectrum}

\begin{abstract}
Given a velocity field $u(x,t)$, we consider the evolution of a passive tracer $\tht$ governed by $\dy_t\tht + u\cdot\gb\tht = \Delta\tht + g$ with time-independent source $g(x)$.
When $\|u\|$ is small in some sense, Batchelor, Howells and Townsend (1959, J.\ Fluid Mech.\ 5:134; henceforth BHT) predicted that the tracer spectrum scales as $|\tht_k|^2\propto|k|^{-4}|u_k|^2$.
Following our recent work for the two-dimensional case, in this paper we prove that the BHT scaling does hold probabilistically, asymptotically for large wavenumbers and for small enough random synthetic three-dimensional incompressible velocity fields $u(x,t)$.
We also relaxed some assumptions on the velocity and tracer source, allowing finite variances for both and full power spectrum for the latter.
\end{abstract}

\maketitle

\section{Introduction}\label{s:intro}

Several theories relate the spectrum of a passive tracer to the energy spectrum of the velocity that advects it.
When inertial effects dominate (tracer) diffusion, which happens at larger scales for sufficiently large velocity, the Batchelor--Corrsin--Obukhov theory \cite{batchelor:59,corrsin:51,obukhov:49} has the tracer spectrum scale as $|k|^{-(\hbeta+5)/2}$, for a (``turbulent'') energy spectrum scaling as $\CE(k)\sim |k|^\hbeta$.
On the other hand, tracer diffusion will inevitably dominate at smaller scales, where it was predicted by Batchelor, Howells and Townsend \cite{batchelor-howells-townsend:59}, and later, using a different argument, by Kraichnan \cite{kraichnan:68,kimura-kraichnan:93} that the tracer spectrum should scale as $|k|^{-4}\CE(k)$.
The authors in \cite{batchelor-howells-townsend:59} assumed that over this range, the characteristic time of velocity is much greater than that of the tracer.
With the source term in the equation for the tracer being approximately steady, they drop the time derivative.
Kraichnan's argument has the velocity field fluctuating randomly in time with a correlation time much shorter than both the diffusive and convective time scales.
For synopses and later developments, see, e.g., \cite{vallis:aofd,shraiman-siggia:00,warhaft:00,gotoh-watanabe-suzuki:11,sreenivasan:19}.
There is also considerable computational evidence for the BHT spectrum \cite{chasnov-canuto-rogallo:88,chasnov:91,holzer-siggia:94,gotoh-watanabe-suzuki:11,yeung-sreenivasan:13,yeung-sreenivasan:14}.

Following our earlier paper on the two-dimensional case \cite{jolly-dw:bht}, in this paper we make rigorous the intuitive arguments of Batchelor--Howells--Townsend and Kraichnan, proving that the BHT scaling does hold probabilistically, asymptotically for large wavenumbers, for a class of small random synthetic three-dimensional incompressible velocity fields.
As in \cite{jolly-dw:bht}, we also confirm the intuition of \cite{batchelor-howells-townsend:59,kraichnan:68} that this holds whether or not the velocity is time dependent, subject to reasonable assumptions.
Relaxing some assumptions in \cite{jolly-dw:bht}, here we allow a more general modal random variable, only requiring it to be circular, and a more general tracer (variance) source, only requiring sufficient Sobolev regularity.
The main limitation of our present result, specifically the convergence proof of the (unknown) remainder, is that it requires an energy spectrum no shallower than $|k|^{-2}$, which rules out an application to the hypothesized Kolmogorov $|k|^{-5/3}$ energy spectrum.

Our general approach is similar to \cite{jolly-dw:bht}, with some differences:
As in the two-dimensional case treated in \cite{jolly-dw:bht}, our approach is to calculate precisely the expectation of the spectrum for the first iterate of a fixed point iteration of the tracer, and show that the error from the actual tracer can be made small by taking the velocity small.  This, coupled with a certain bound on the variance, establish the BHT scaling.
Unlike the 2d case, however, here we consider a random tracer source at all scales, except for the bound on the variance.
Also different is that rather than randomizing the phases of the streamfunction in Fourier space, here we randomize components of the coefficients in the Craya--Herring 3d basis \cite{craya:58,herring:74,kimura-herring:12}.

Through its obvious relationship to various Sobolev norms, the tracer spectrum is related to the degree of tracer mixing and of the efficiency of mixing by the advecting velocity \cite{thiffeault-doering-gibbon:04,doering-thiffeault:06,thiffeault-doering:11,miles-doering:18j,miles-doering:18n,oakley-thiffeault-doering:21}.
A steep tracer spectrum such as the BHT suggests poor mixing, either throughout the entire range for small velocity (treated here), or beyond the so-called diffusive wavenumber \cite[(8.108)]{vallis:aofd} for larger velocity (with the Obukhov--Corrsin/Batchelor tracer spectrum at larger scales).

\section{Preliminaries}\label{s:prelim}

We consider the evolution of a passive scalar $\tht(x,t)$ under
a prescribed velocity field $u(x,t)$ and source $g(x)$,
\begin{equation}\label{q:dtht}
   \dy_t\tht + u\cdot\gb\tht = \Delta\tht + g.
\end{equation}
For simplicity, we take $x\in\Dom:=[0,2\pi]^3$ and assume periodic boundary conditions in all directions.
With no loss of generality, we assume that, for all $t$
\begin{equation}\label{q:mean0}
   \int_\Dom u(x,t) \;\mathrm{d}x = 0
   \quad\text{and}\quad
   \int_\Dom \tht(x,t) \;\mathrm{d}x = 0.
\end{equation}
We note that for the latter to hold for all $t>0$, we must impose the same condition on $g$ and $\tht(\cdot,0)$.

We expand $\tht(x,t)$ in Fourier series as
\begin{equation}
  \tht(x,t) = \tssum_k'\,\tht_k(t)\ex^{\im k\cdot x},
\end{equation}
where the prime indicates that the sum is taken over $k\in\Zp$ to satisfy \eqref{q:mean0}, and denote spectral projection by
\begin{equation}\label{q:Pdef}
   (\Proj_{\kappa,\kappa'}\tht)(x,t) := \sum\nolimits_{\kappa\le|k|<\kappa'} \tht_k(t)\ex^{\im k\cdot x}.
\end{equation}

For the tracer (variance) source, we take either the deterministic
\begin{equation}\label{q:gdef}
  (\Delta^{-1}g)(x) = \tssum_k'\,\gamma_k\ex^{\im k\cdot x}
\end{equation}
where $\gamma_k\in\Comp$ with $\gamma_{-k}=\overline{\gamma_k}$, $\gamma_0=0$ and
\begin{equation}\label{q:gamma}
  |\gamma_k| \le c_g|k|^\alpha
  \qquad\text{when }|k|\ge\kpg
\end{equation}
for some constants $c_g\ge0$, $\kpg>1$ and $\alpha<0$.
The case $c_g=0$ gives the bandwidth-limited source considered in \cite{jolly-dw:bht}, for which somewhat tighter estimates can be obtained below.
Alternately, and without altering the conclusion (see below), one may also consider the random
\begin{equation}\label{q:grdef}
  (\Delta^{-1}g_r)(x) = \tssum_k'\,\gamma_k Z_k\ex^{\im k\cdot x}.
\end{equation}

The complex random variable $Z_k$ is constructed as follows.
For a fixed $k\in\Zahl^3$, we write $Z_k=R_k\ex^{\im\zeta_k}$ where the random phase $\zeta_k\sim\Unif(0,2\pi)$, implying that $Z_k$ is \emph{circular\/}, i.e.\ $\EV(\ex^{\im\phi}Z_k)=\EV Z_k$ for any deterministic real $\phi$.
This in turn implies that $\EV Z_k^n=0$ for any integer $n\ne0$.
We constrain the random modulus $R_k$ so that
\begin{equation}\label{q:r0}
  \mxr := \sup\{s: P(R_k>s) > 0 \} < \infty,
\end{equation}
implying that $Z_k$ is bounded, $|Z_k|\le\mxr$.
(With circularity, this means that $Z_k$ is a {\em proper random variable\/}.)
With no loss of generality, we put as its variance and fourth moment
\begin{equation}\label{q:r24}
  \EV|Z_k|^2 = 1
  \quad\text{and}\quad
  \EV|Z_k|^4 = \ppz.
\end{equation}
We note that by the Cauchy--Schwarz inequality, $1=(\EV|Z_k|^2)^2\le\EV|Z_k|^4=\ppz$, with equality (i.e.\ $\ppz=1$) attained iff $|Z_k|=1$ a.s.
We denote $Z_k\sim\CR_\ppz$.

Now for $g(x,t)$ to be real-valued, we must require that $Z_{-k}=\overline{Z_k}$, but otherwise $Z_k$ are assumed to be uncorrelated, so $\EV Z_j\overline{Z_k}=\delta_{jk}$ and $\EV Z_j Z_k=\delta_{j,-k}$.
A convenient tool to handle this reality constraint is the wavenumber half-space
\begin{equation}\label{q:z3+}
  \Zahl_+^3:=\{(l,m,n):n>0\}\cup\{(l,m,0):m>0\}\cup\{(l,0,0):l>0\}
\end{equation}
with $l$, $m$, $n\in\Zahl$; we thus have $\Zahl_+^3\cup(-\Zahl_+^3)=\Zp$.
With this, we can write $\EV Z_jZ_k=0$ and $\EV Z_j\overline{Z_k}=\delta_{jk}$ for all $j$, $k\in\Zahl_+^3$.

To set up our velocity, we recall the Craya--Herring basis \cite{craya:58,herring:74,kimura-herring:12}:
Writing a wave\-vector $k=(k_x,k_y,k_z)$, defining $k_h:=(k_x,k_y,0)$ and using spherical coordinates $(1,\vfi,\phi)$, we define the ($k$-dependent) orthonormal vectors
\begin{align*}
    d_k &= \frac{k}{|k|}
	& &= \frac{(k_x,k_y,k_z)}{|k|}
	& &=(\sin\vfi\cos\phi,\sin\vfi\sin\phi,\cos\vfi),\\
    e_k &= \frac{k\times\hat z}{|k\times\hat z|}
	& &= \frac{(k_y,-k_x,0)}{|k_h|}
	& &= (\sin\phi,-\cos\phi,0),\\
    f_k &= \frac{k\times k\times\hat z}{|k\times k\times\hat z|}
	& &= \frac{(k_xk_z,k_yk_z,-|k_h|^2)}{|k|\,|k_h|}
	& &= (\cos\vfi\cos\phi,\cos\vfi\sin\phi,-\sin\vfi)\;,
\end{align*}
where $\hat z= (0,0,1)$.
With these basis vectors, any velocity field $v(x,t)$ can be written as
\begin{equation}
   v(x,t) = \tssum_k\,[\tilde U_d(k,t) d_k + \tilde U_e(k,t) e_k + \tilde U_f(k,t) f_k]\ex^{\im k\cdot x}
\end{equation}
for some  $(\tilde U_d,\tilde U_e,\tilde U_f)\in \Comp^3$.
Now since $\text{div}\,v(x,t)=\im\sum_k\,\tilde U_d(k,t)|k|\ex^{\im k\cdot x}$, for $v(\cdot,t)$ to be incompressible we must have $\tilde U_d(k,t)\equiv0$.
We thus write our incompressible velocity field as
\begin{equation}\label{q:udef}
  u(x,t) = \sump{k} |k|^\beta [U_e e_k V_k(t) + U_f f_k W_k(t)]\,\ex^{\im k\cdot x}
\end{equation}
where $\beta<0$, $U_e$ and $U_f$ are real constants, and $V_k(t)$ and $W_k(t)$ complex random processes whose time behaviour will be precised below.\footnote{We note that in this paper $|u_k|\sim|k|^\beta$ whereas in \cite{jolly-dw:bht} $|u_k|\sim|k|^{\beta+1}$, so the $\beta$s are different; in hindsight, we feel the present notation to be more natural.}
For now, we require that, for each fixed $t$, $V_k(t)$ and $W_k(t)\sim\CR_\ppt$, proper random variables with unit variance and $\EV|V_k(t)|^4=\EV|W_k(t)|^4=\ppt$, and bounded as $|V_k(t)|$, $|W_k(t)|\le\mxr$.
As with $g(x)$, for $u(x,t)$ to be real-valued, we must require that $V_{-k}(t)=\overline{V_k(t)}$ and $W_{-k}(t)=\overline{W_k(t)}$.
Aside from this constraint, we assume that $V_k(t)$ and $W_k(t)$ are uncorrelated, so $\EV V_j(t)\overline{V_k(t)}=\EV W_j(t)\overline{W_k(t)}=\delta_{jk}$ and $\EV V_j(t)\overline{W_k(t)}=0$ for all $j$, $k\in\Zahl_+^3$.
Unlike $d_k$, which gives the divergent component of $u$, the $e_k$ and $f_k$ components have no special meaning when $u$ is isotropic, although they do carry physical significance in, e.g., stratified flows (as the ``vortex'' and ``wave'' components, respectively).

We turn to the energy spectrum.
First, we compute [suppressing dependence on $t$ where no confusion may arise]
\begin{equation}\begin{aligned}
  &\|\Pkk u(\cdot,t)\|_{L^2}^2\\
	&\quad= \bigl(\tssum_j\,|j|^\beta (U_e e_j V_j + U_f f_j W_j)\ex^{\im j\cdot x},\tssum_k\,|k|^\beta(U_e e_k V_k + U_f f_k W_k)\ex^{\im k\cdot x}\bigr)_{L^2}^{}\\
	&\quad= 8\pi^3 \tssum_k\,|k|^{2\beta}( U_e^2|V_k|^2 + U_f^2|W_k|^2),
\end{aligned}\end{equation}
where for the second equality we have used the facts that $(\ex^{\im j\cdot x},\ex^{\im k\cdot x})_{L^2}^{}=(2\pi)^3\delta_{jk}$, $e_k\cdot e_k = f_k\cdot f_k=1$ and $e_k\cdot f_k=0$.
Unlike in our previous work on the 2d case \cite{jolly-dw:bht}, here $\|\Pkk u\|^2$ contains the random variables $|V_k|^2$ and $|W_k|^2$, so we compute
\begin{equation}\label{Pucalc}\begin{aligned}
  \EV\|\Pkk u(\cdot,t)\|_{L^2}^2
	&= 8\pi^3 \tssum_k\,|k|^{2\beta}(U_e^2\EV|V_k|^2 + U_f^2\EV|W_k|^2)\\
	&= 8\pi^3(U_e^2+U_f^2)\, \tssum_{\kappa\le|k|<2\kappa} |k|^{2\beta}.
\end{aligned}\end{equation}
{Approximating the sum as an integral over the corresponding region in $\Real^d$, we find t}his scales as $\kappa^{2\beta+d}$ for sufficiently large $\kappa$, so for the classical Kolmogorov $-\frac53$ spectrum in $d=3$ (i.e.\ $2\beta+d-1=-\frac53$), we must take $\beta=-\frac{11}{6}$.
Next, we compute the variance $\Var\|\Pkk u\|_{L^2}^2$, by first using  \eqref{q:r24} to obtain
\begin{equation*}\begin{aligned}
    &(2\pi)^{-6}\EV\|\Pkk u\|_{L^2}^4
    = \EV\bigl(\tssum_k\,|k|^{2\beta}(U_e^2|V_k|^2 + U_f^2|W_k|^2)\bigr)^2\\
    &\quad= \tssum_{jk}\,|j|^{2\beta}|k|^{2\beta}(U_e^4\EV|V_j|^2|V_k|^2 + 2U_e^2U_f^2\EV|V_j|^2|W_k|^2 + U_f^4\EV|W_j|^2|W_k|^2)\\
    &\quad= \tssum_{j\ne k}\,|j|^{2\beta}|k|^{2\beta}(U_e^4\EV|V_j|^2|V_k|^2 + U_f^4\EV|W_j|^2|W_k|^2)\\
    &\qquad+ \tssum_{k}\,|k|^{4\beta}(U_e^4\EV|V_k|^4 + U_f^4\EV|W_k|^4)
    + 2\,\tssum_{jk}\,|j|^{2\beta}|k|^{2\beta}U_e^2U_f^2\EV|V_j|^2|W_k|^2\\
    &\quad= \tssum_{j\ne k}\,|j|^{2\beta}|k|^{2\beta}(U_e^4 + U_f^4)
    + \ppt\,\tssum_{j=k}\,|j|^{2\beta}|k|^{2\beta}(U_e^4 + U_f^4)\\
    &\qquad+ 2\,\tssum_{jk}\,|j|^{2\beta}|k|^{2\beta}U_e^2U_f^2\\
    &\quad= \bigl((U_e^2+U_f^2)\tssum_k\,|k|^{2\beta}\bigr)^2 + (\ppt-1)(U_e^4+U_f^4)\tssum_k\,|k|^{4\beta},
\end{aligned}\end{equation*}
whence
\begin{equation}\begin{aligned}
  \Var\|\Pkk u\|_{L^2}^2 &= \EV\|\Pkk u\|_{L^2}^4 - \bigl(\EV\|\Pkk u\|_{L^2}^2\bigr)^2\\
  &= (2\pi)^6(\ppt-1)(U_e^4+U_f^4)\tssum_{\kappa\le|k|<2\kappa}\,|k|^{4\beta}.
\end{aligned}\end{equation}
For large $\kappa$, this scales as $\kappa^{4\beta+d}$, so $(\Var\|\Pkk u\|^2)^{1/2}/\EV\|\Pkk u\|^2\propto\kappa^{-d/2}$, giving asymptotic convergence (over dyads) to an energy spectrum that is $\kappa^{2\beta+d-1}$.

For the time dependence, we assume that,
for all $j$, $k\in\Zahl_+^3$,
\begin{equation}\label{q:tcov}\begin{aligned}
    % \cov(V_k(s),V_k(t)) &=
    \EV V_j(s)\overline{V_k(t)}
    % &= \cov(W_k(s),W_k(t))
    = \EV W_j(s)\overline{W_k(t)}
    = \delta_{jk}\Phi_k(s-t).
\end{aligned}\end{equation}
We take a time correlation function of the form
\begin{equation}\label{q:Phi}
   \Phi_k(t) = \Phi(\chi_k^{}|t|)
\end{equation}
with $\Phi\in C^n(\Real_+)$ for some $n\ge2$ and $\Phi(0)=1$,
where the correlation timescale $\chi_k^{-1}$ is assumed not to grow too rapidly with $|k|$,
\begin{equation}\label{q:chik}
   \lim_{|k|\to\infty}\chi_k^{}|k|^{-2} = 0.
\end{equation}
Using the Cauchy--Schwarz inequality, we have
\begin{equation}\label{q:bdx2}\begin{aligned}
   |\Phi(h)| &= |\EV\,V_k(s)\overline{V_k(s+h)}|\\
	&\le (\EV\,|V_k(s)|^2)^{1/2}(\EV\,|V_k(s+h)|^2)^{1/2}
	= \Phi(0) = 1.
\end{aligned}\end{equation}
We also assume that $V_k(t)$ has sufficient smoothness in $t$ for the usual Riemann integral to be defined.
As before, $V_j(s)$ and $W_k(t)$ are uncorrelated proper random variables for any $j$, $k\in\Zahl_+^3$, $s$ and $t$.

\section{Main Result and Discussion}\label{s:main}

As in the 2d case, it is both convenient and instructive to first consider the static case
\begin{equation}\label{q:static}
  u\cdot\gb\tht = \Delta\tht + g.
\end{equation}
Here the time-independent random velocity is
\begin{equation}\label{q:usdef}
  u(x) = \tssum_k'\, |k|^\beta [U_e e_k V_k + U_f f_k W_k]\,\ex^{\im k\cdot x}
\end{equation}
with $V_k$ and $W_k\sim\CR_\ppt$ i.i.d.\ satisfying the usual reality constraints.

{The strategy, as in \cite{jolly-dw:bht}, is to solve \eqref{q:static} by a fixed-point iteration, show that the error from the first iterate $\vartheta$ to the solution is, at most, of the same order, as $\kappa \to \infty$ as $\EV \|\Pkk\vtt\|_{L^2}^2$.  The latter will satisfy the BHT spectrum and the {\it relative} error can be made arbitrarily small by taking $U/U_{\text{max}}$ small enough.}
We put $\tht^{(0)}=-\Delta^{-1}g$ and
\begin{equation}\label{q:sfpi}
  \tht^{(n+1)} = \Delta^{-1}(u\cdot\gb\tht^{(n)}-g).
\end{equation}
We seek to prove that this iteration converges under some assumptions, and that the limit $\tht^{(\infty)}$ asymptotes, dyad-wise as $\kappa\to\infty$, probabilistically to the BHT spectrum.
Unlike in \cite{jolly-dw:bht}, however, {here} our source $g$ may have a full spectrum, so $\vtt:=\tht^{(1)}-\tht^{(0)}=-\Delta^{-1}(u\cdot\gb\Delta^{-1}g)$ has a remainder arising from high-frequency parts of $g$.

Denoting $\ggfo{f}:=\tssum_k\,|f_k|$ and putting $U_e=U_f=U$, we have the following:

\begin{theorem}\label{t:static}
Let $u$ be given by \eqref{q:usdef} with $\beta<-2$ and {satisfying} \eqref{q:umax}, $g$ and $g_r$ by \eqref{q:gdef}--\eqref{q:gamma} and \eqref{q:grdef}--\eqref{q:gamma} with $\alpha<2\min\{\beta,-d\}-1$ and $\kpg\ge16$.
Then for $\kappa\ge4\kpg^2$ the static problem \eqref{q:static} has a unique solution $\tht=-\Delta^{-1}g + \vtt + \delta\theta$ where
\begin{equation}
   \EV\|\Pkk\vtt\|_{L^2}^2 = \kappa^{2\beta-1} \frac{8\pi U^2}3\,\frac{2^{2\beta-1}-1}{2\beta-1}\,\|\Proj_{1,\kappa^{1/2}}\gb^{-1}g\|_{L^2}^2 + \rmd(\kappa) \label{q:epkk}\\
\end{equation}
and the remainder terms are bounded as, with $\eps=U/U_{max}$,
\begin{align}
   &|\rmd(\kappa)| \le c_g^2 U^2 c(\alpha,\beta)\,\kappa^\alpha + c(\beta)U^2\|\gb^{-1}g\|_{L^2}^2\kappa^{2\beta-3/2},\label{q:bdE}\\
   &\|\Pkk\delta\tht\|_{L^2}^2 \le \eps^2c(g,\alpha,\beta) U^2\mxr^2\,\kappa^{2\beta-1}.\label{q:bddtht}
\end{align}
With finite-mode source, $c_g=0$ in \eqref{q:gamma}, the variance is bounded from above as
\intomargin
\begin{equation}\label{q:ubV}
   \Var\|\Pkk\vtt\|_{L^2}^2 \lesssim \kappa^{4\beta-5}\,{16\pi U^4}\,\frac{2^{4\beta-5}-1}{4\beta-5} \|\gb^{-1}g\|_{L^2}^2 \bigl\{ \ggfosq{\gb^{-1}g} + (\ppt-1) \|\gb^{-1}g\|_{L^2}^2 \bigr\}.
\end{equation}
\end{theorem}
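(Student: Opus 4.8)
\emph{Proof proposal.} The plan is the one sketched in the text: show that the iteration \eqref{q:sfpi} converges (which yields existence and uniqueness), compute $\EV\|\Pkk\vtt\|_{L^2}^2$ essentially exactly, and then bound the two remainders and, for $c_g=0$, the variance. For existence and uniqueness I would bypass the iteration at first: since $u$ is divergence free, $u\cdot\gb$ is skew-adjoint on $L^2(\Dom)$, so the bilinear form $a(\phi,\psi):=(\gb\phi,\gb\psi)+(u\cdot\gb\phi,\psi)$ attached to \eqref{q:static} is coercive on the mean-zero subspace of $H^1(\Dom)$ by Poincar\'e, and it is bounded there because $|(u\cdot\gb\phi,\psi)|=|(u\phi,\gb\psi)|\le\|u\|_{L^3}\|\phi\|_{L^6}\|\gb\psi\|_{L^2}\lesssim\|u\|_{H^{1/2}}\|\gb\phi\|_{L^2}\|\gb\psi\|_{L^2}$ with $\|u\|_{H^{1/2}}^2\lesssim U^2\mxr^2\sump{k}|k|^{2\beta+1}<\infty$ \emph{precisely} because $\beta<-2$. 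Lax--Milgram gives the unique $\tht\in H^1$. Writing $\tht=-\Delta^{-1}g+\vtt+\delta\tht$ and subtracting the relations satisfied by $-\Delta^{-1}g$ and by $\vtt=-\Delta^{-1}(u\cdot\gb\Delta^{-1}g)$ shows $\delta\tht$ solves $-\Delta\,\delta\tht+u\cdot\gb\,\delta\tht=-u\cdot\gb\vtt$, i.e.\ $(I-L)\delta\tht=L\vtt$ with $L\phi:=\Delta^{-1}(u\cdot\gb\phi)=\Delta^{-1}\gb\cdot(u\phi)$; the same Sobolev estimate gives $\|L\|_{L^2\to L^2}\lesssim U\mxr(\sump{k}|k|^{2\beta+1})^{1/2}$, which \eqref{q:umax} makes $<1$, so $\delta\tht=\sum_{n\ge1}L^n\vtt$ and $\tht^{(n)}\to\tht$.

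For the leading term, expand \eqref{q:udef}--\eqref{q:gdef} and use $u_j\perp j$ (so $u_j\cdot k=u_j\cdot(k-j)$) to get $\vtt_k=\frac{\im U}{|k|^2}\sump{j}|j|^\beta[(e_j\cdot k)V_j+(f_j\cdot k)W_j]\,\gamma_{k-j}$. Taking $\EV$, the circularity and $\EV V_j\overline{V_{j'}}=\EV W_j\overline{W_{j'}}=\delta_{jj'}$, $\EV V_j\overline{W_{j'}}=0$, together with the Craya--Herring identity $(e_j\cdot k)^2+(f_j\cdot k)^2=|k|^2-(k\cdot j)^2/|j|^2$, give $\EV|\vtt_k|^2=\frac{U^2}{|k|^4}\sump{j}|j|^{2\beta}(|k|^2-(k\cdot j)^2/|j|^2)|\gamma_{k-j}|^2$ (and identically for the random source $g_r$, using $\EV Z_k\overline{Z_{k'}}=\delta_{kk'}$). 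Substituting $m=k-j$ and Taylor expanding for $|m|\ll|k|$ gives $|k-m|^{2\beta}=|k|^{2\beta}(1+\Oh(|m|/|k|))$ and $|k|^2-(k\cdot(k-m))^2/|k-m|^2=|m|^2-(k\cdot m)^2/|k|^2+\Oh(|m|^3/|k|)$. I split the $m$-sum at $|m|=\kappa^{1/2}$ (which exceeds $\kpg$ since $\kappa\ge4\kpg^2$): on the low part, replacing $\sump{\kappa\le|k|<2\kappa}$ by the integral over $\Real^3$ and using the isotropic average $\langle(k\cdot m)^2/|k|^2\rangle=|m|^2/3$ produces the factor $\tfrac23$ and turns $\sump{\kappa\le|k|<2\kappa}|k|^{2\beta-4}$ into $\frac{4\pi}{2\beta-1}(2^{2\beta-1}-1)\kappa^{2\beta-1}$, while $\sum_{|m|<\kappa^{1/2}}|m|^2|\gamma_m|^2=(8\pi^3)^{-1}\|\Proj_{1,\kappa^{1/2}}\gb^{-1}g\|_{L^2}^2$ — this is exactly the stated main term. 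The remainder $\rmd(\kappa)$ then collects three pieces: (i) the high modes $|m|\ge\kappa^{1/2}$, where $|\gamma_m|\le c_g|m|^\alpha$ and the geometric factor is $\le|k|^2$, which — using $\alpha<2\min\{\beta,-d\}-1$ to keep the velocity--source convolution summable even when $-3<\beta<-2$ — is $\lesssim c_g^2U^2\kappa^\alpha$; (ii) the $\Oh(|m|^3/|k|)$ Taylor error, bounded by $|m|\le\kappa^{1/2}$ and summed to $\lesssim U^2\|\gb^{-1}g\|_{L^2}^2\kappa^{2\beta-3/2}$; (iii) the Euler--Maclaurin sum-to-integral error, of lower order $\kappa^{2\beta-2}$. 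This gives \eqref{q:bdE}, and for $c_g=0$ only (ii)--(iii) remain and $\Proj_{1,\kappa^{1/2}}\gb^{-1}g=\gb^{-1}g$.

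For $\delta\tht$, the crucial observation is $|e_j\cdot k|=|e_j\cdot(k-j)|\le|k-j|$ (and likewise $f_j$), so splitting the convolution in $\vtt_k$ at $|k-j|=|k|/2$ (again invoking $\alpha<2\min\{\beta,-d\}-1$ for the outer part) yields the pointwise bound $|\vtt_k|\lesssim U\mxr|k|^{\beta-2}(\ggfo{\gb^{-1}g}+c_g)$. The same mechanism shows one application of $L$ \emph{preserves} this decay rate, $|(L\phi)_k|\lesssim U\mxr|k|^{\beta-2}\,\sup_m(|m|^{2-\beta}|\phi_m|)\sump{n}|n|^{\beta-1}$, so by induction the bounds for $L^n\vtt$ decay like $|k|^{\beta-2}$ with a geometric prefactor $(cU\mxr)^{n-1}$ made summable by \eqref{q:umax}; summing the series and then over the dyad gives $\|\Pkk\delta\tht\|_{L^2}^2\lesssim(U\mxr)^2\,U^2\mxr^2\,c(g,\alpha,\beta)\kappa^{2\beta-1}$, and since \eqref{q:umax} controls $U_{\mathrm{max}}\mxr$ the factor $(U\mxr)^2$ is $\le\eps^2$, which is \eqref{q:bddtht}. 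For the variance, $\Var\|\Pkk\vtt\|_{L^2}^2=(8\pi^3)^2\sump{k,k'}\cov(|\vtt_k|^2,|\vtt_{k'}|^2)$ over the dyad; with $c_g=0$ each $|\vtt_k|^2$ is a Hermitian quadratic form in $\{V_j,W_j:|k-j|<\kpg\}$, so the covariance vanishes unless $|k-k'|<2\kpg$. For such $k,k'$ the fourth moments are evaluated via circularity and \eqref{q:r24}: the diagonal$\times$diagonal pairing reconstructs $\EV|\vtt_k|^2\EV|\vtt_{k'}|^2$ and cancels, leaving the fully diagonal term (which carries $\ppt-1$) and the cross pairing; in both, keeping the factor $|e_j\cdot(k-j)|\le|k-j|$ lets the source sums assemble into $\sum_m|m||\gamma_m|=\ggfo{\gb^{-1}g}$ and $\sum_m|m|^2|\gamma_m|^2$. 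Bounding $|j|\sim|k|\sim\kappa$, summing the $\Oh(\kpg^3)$ neighbours $k'$, and replacing $\sump{\kappa\le|k|<2\kappa}|k|^{4\beta-8}$ by $\frac{4\pi}{4\beta-5}(2^{4\beta-5}-1)\kappa^{4\beta-5}$ gives \eqref{q:ubV}.

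The main obstacle I anticipate is the remainder $\delta\tht$: the naive operator/energy estimates yield only $\|\Pkk\delta\tht\|\lesssim\kappa^{-1}$, which is far too weak, and getting the sharp rate $\kappa^{2\beta-1}$ forces one to use the orthogonality $e_j,f_j\perp j$ to recover the extra decay and — the genuinely delicate point — to control that decay \emph{uniformly in $n$} along the Neumann series so that \eqref{q:umax} absorbs the accumulated constants. The bookkeeping of the three error sources in $\rmd(\kappa)$, in particular using $\alpha<2\min\{\beta,-d\}-1$ to tame the source--velocity convolutions in the regime $-3<\beta<-2$, is the other place where care is needed; everything else is routine Fourier calculus.
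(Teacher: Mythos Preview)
Your proposal is correct and follows the same three-part plan as the paper (compute $\EV\|\Pkk\vtt\|^2$, bound the variance, bound $\delta\tht$ by mode-wise propagation through the iteration), but with a few streamlining choices worth noting. First, you invoke Lax--Milgram for existence and uniqueness, which the paper does not do---it relies solely on the convergence of the fixed-point iteration; your route is cleaner and needs no smallness of $u$, though smallness is needed anyway for the $\delta\tht$ bound. Second, and more substantively, for the leading term you collapse the Craya--Herring angular factors at the outset via the identity $(e_j\cdot k)^2+(f_j\cdot k)^2=|k|^2-(k\cdot j)^2/|j|^2$ and then extract the coefficient $\tfrac23$ by the isotropic average $\langle(k\cdot m)^2/|k|^2\rangle=|m|^2/3$; the paper instead keeps $U_e$ and $U_f$ separate, performs the spherical integrals in $(\vfi,\phi)$ explicitly (its equation \eqref{q:i1a}), and only specialises to $U_e=U_f$ afterwards. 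Your path is shorter for the isotropic statement at hand but forfeits the anisotropic formula the paper records along the way. Third, for $\delta\tht$ the paper carries the two-regime weight $|k|^{-2}K_\beta(|k|)$ with $K_\beta(|k|)=\min\{1,(2\kpg)^{-\beta}|k|^\beta\}$ through the iteration; your single-scale weight $|k|^{\beta-2}$ is a harmless weakening for small $|k|$ and leads to the same convolution estimate (the paper's \eqref{q:skbeta}), so the induction closes identically. Finally, your variance sketch correctly isolates the key localisation $|k-k'|<2\kpg$ forced by the finite-mode source, but the paper expands $\EV|\vfi_k|^2|\vfi_l|^2$ into nine explicit pairings (cases (a)--(f), (h), (o), (p)) and bounds each; filling that bookkeeping in is routine but longer than your outline suggests.
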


\noindent {As noted after (1.14), $\EV \|P_{\kappa,2\kappa} u\|_{L^2}^2$ scales as $\kappa^{2\beta +3}$ so that
\begin{equation*}
   \EV \|\Pkk\vtt\|_{L^2}^2/\EV \|\Pkk u\|_{L^2}^2\propto|k|^{-4}\;.
\end{equation*}
As in \cite{jolly-dw:bht}, by $f_1(\kappa)\simeq f_2(\kappa)$ we mean that $\lim_{\kappa\to\infty} f_1(\kappa)/f_2(\kappa)=1$.
Thus, ``$\simeq$'' arises either from lattice effect, when we approximate sums over subsets of $\Zahl^d$ by the corresponding integrals over subsets of $\Real^d$, or from dropping terms
of (relative) order $\kpg/\kappa$.
The same convention will be used for ``$\lesssim$''.
As a consequence, absolute constants are included in such relations.}
We note that with finite-mode sources, $\Proj_{1,\kappa^{1/2}}\gb^{-1}g=\gb^{-1}g$ in \eqref{q:epkk}, while in \eqref{q:bdE} the first term vanishes and the second term can be improved to $\Oh(\kappa^{2\beta-2})$.
These results are stated for the isotropic case, $U_e=U_f$, but we have kept $U_e$ and $U_f$ when computing (the main part of) $\EV\|\Pkk\vtt\|^2$ for readers interested in the effect of non-isotropic velocity.
We see no conceptual difficulty to extend \eqref{q:ubV} to sources with full spectra following the approach for $\EV\|\Pkk\vtt\|^2$, but did not attempt this in order to keep the proof readable.

For the time-dependent case, we write the solution $\tht(x,t)$ of \eqref{q:dtht} as the limit of iterates $\tht^{(n)}(x,t)$ defined by
\begin{align}
   &\tht^{(0)} = -\Delta^{-1}g,\label{q:it0}\\
   &\tht^{(n+1)}(\cdot,t) = -\Delta^{-1}g - \int_0^t \ex^{(t-s)\Delta}[u(\cdot,s)\cdot\gb\tht^{(n)}(\cdot,s)] \ds.\label{q:itn}
\end{align}
Here $\ex^{-t\Delta}$ is the heat kernel, i.e.\ $\tht^{(n+1)}$ is the solution of
\begin{equation}
   (\dy_t - \Delta)\tht^{(n+1)} = g - u\cdot\gb\tht^{(n)}
   \quad\text{with}\quad \tht^{(n+1)}(\cdot,0) = -\Delta^{-1}g.
\end{equation}
Our main result is that this iteration converges, and that the limit obeys the BHT scaling in the following sense:

\begin{theorem}\label{t:main}
Let the source $g(x)$ be given by \eqref{q:gdef}--\eqref{q:gamma} or \eqref{q:grdef}--\eqref{q:gamma} with $\alpha<2\min\{\beta,-d\}-1$, the incompressible velocity $u(x,t)$ by \eqref{q:udef} and \eqref{q:tcov} with $\beta<-2$ and $U$ small enough that the convergence condition \eqref{q:idr} holds.
Then the solution $\tht(x,t)$ of \eqref{q:dtht} can be written as $-\Delta^{-1}g+\vtt+\delta\tht$ where $\vtt(x,t)$ satisfies
\begin{equation}\begin{aligned}
    \lim_{t\to\infty}\EV|\vtt_k(t)|^2 &= |k|^{-4}\tssum_j' \bigl[U_e^2(e_{k-j}\cdot j)^2 + U_f^2(f_{k-j}\cdot j)^2\bigr]\,|k-j|^{2\beta}|\gamma_j|^2 \times{}\\
	&\quad\Bigl[1 + \frac{\chi_{k-j}^{}}{|k|^2}\Phi'(0) + \cdots + \frac{\chi_{k-j}^{{n-1}}}{|k|^{2n}}\int_0^\infty \ex^{-s|k|^2/\chi_{k-j}^{}}\Phi^{(n)}(s)\ds\Bigr].
\end{aligned}\end{equation}
When $\sup_k\{\chi_k^{}\}/\kappa^2\ll1$, this reduces to the static case in Theorem~\ref{t:static}, up to further lower-order remainders.
\end{theorem}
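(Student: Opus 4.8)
The plan is to follow the pattern of Theorem~\ref{t:static} and of \cite{jolly-dw:bht}: view \eqref{q:it0}--\eqref{q:itn} as the orbit of the affine map $L:\psi\mapsto-\Delta^{-1}g-\int_0^t\ex^{(t-s)\Delta}[u(\cdot,s)\cdot\gb\psi(\cdot,s)]\ds$ on time-dependent mean-zero fields, show that under the smallness condition \eqref{q:idr} the map $L$ is a contraction on a suitable space of fields on $[0,\infty)$, and take $\tht$ to be its fixed point. Writing $\vtt:=\tht^{(1)}-\tht^{(0)}=\int_0^t\ex^{(t-s)\Delta}[u(\cdot,s)\cdot\gb\Delta^{-1}g]\ds$ and $\delta\tht:=\tht-\tht^{(0)}-\vtt=\lim_n\tht^{(n)}-\tht^{(1)}$, the contraction estimate gives $\|\delta\tht\|\le q^2(1-q)^{-1}\|\vtt\|$ with $q<1$ the Lipschitz constant of $L$ (proportional to $U/U_{max}$), so $\delta\tht$ is of lower relative order, as in \eqref{q:bddtht}; this yields the asserted decomposition. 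The contraction rests on parabolic smoothing, $\|\ex^{\sigma\Delta}\gb\cdot F\|_{L^2}\le C\sigma^{-1/2}\ex^{-\sigma/2}\|F\|_{L^2}$ for mean-zero $F$ (the exponential factor coming from the spectral gap of $-\Delta$ on mean-zero functions, available by \eqref{q:mean0}), applied to $u\cdot\gb\psi=\gb\cdot(u\psi)$; the exponential makes the time convolution integrable up to $t=\infty$, hence the bound uniform in $t$ and the steady state $t\to\infty$ well-defined.

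Next I would compute $\vtt_k(t)$ in Fourier space. Since $(\gb\Delta^{-1}g)_j=\im j\,\gamma_j$ and, by \eqref{q:udef}, $u_{k-j}(s)=|k-j|^\beta[U_e e_{k-j}V_{k-j}(s)+U_f f_{k-j}W_{k-j}(s)]$, the convolution theorem gives
\begin{equation*}
  \vtt_k(t)=\im\int_0^t\ex^{-(t-s)|k|^2}\tssum_j'\,|k-j|^\beta\bigl[U_e(e_{k-j}\cdot j)V_{k-j}(s)+U_f(f_{k-j}\cdot j)W_{k-j}(s)\bigr]\gamma_j\ds.
\end{equation*}
Forming $\EV|\vtt_k(t)|^2$ yields a double sum over $j,j'$ and a double time integral. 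The $V$--$W$ cross terms vanish since $\EV V_j(s)\overline{W_{j'}(s')}=0$, and by the covariance structure \eqref{q:tcov} together with the circularity and properness of the modal variables — organised via the half-space $\Zahl_+^3$ as in Section~\ref{s:prelim} — all off-diagonal terms vanish, leaving only $j=j'$ with $\EV V_{k-j}(s)\overline{V_{k-j}(s')}=\EV W_{k-j}(s)\overline{W_{k-j}(s')}=\Phi_{k-j}(s-s')$. Hence
\begin{equation*}
  \EV|\vtt_k(t)|^2=\tssum_j'\,|k-j|^{2\beta}\bigl[U_e^2(e_{k-j}\cdot j)^2+U_f^2(f_{k-j}\cdot j)^2\bigr]|\gamma_j|^2\,T_{k,j}(t),
\end{equation*}
where $T_{k,j}(t):=\int_0^t\int_0^t\ex^{-(t-s)|k|^2}\ex^{-(t-s')|k|^2}\Phi_{k-j}(s-s')\ds\,\mathrm{d}s'$.

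The core step is the limit of $T_{k,j}$. Substituting $\sigma=t-s$, $\sigma'=t-s'$ and letting $t\to\infty$ gives $\int_0^\infty\int_0^\infty\ex^{-(\sigma+\sigma')|k|^2}\Phi(\chi_{k-j}|\sigma-\sigma'|)\,\mathrm{d}\sigma\,\mathrm{d}\sigma'$ by dominated convergence; splitting at $\sigma=\sigma'$ and changing variables to the smaller of $\sigma,\sigma'$ and their difference factorises this as $|k|^{-2}\chi_{k-j}^{-1}\int_0^\infty\ex^{-w|k|^2/\chi_{k-j}}\Phi(w)\,\mathrm{d}w$. Integrating by parts $n$ times, using $\Phi\in C^n$, $\Phi(0)=1$ and the bound $|\Phi|\le1$ from \eqref{q:bdx2} (with $\Phi$ and its derivatives of at most polynomial growth, so the boundary terms at $\infty$ drop), turns this into $|k|^{-4}$ times the bracketed series $1+\chi_{k-j}|k|^{-2}\Phi'(0)+\cdots$ of the theorem. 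To pass $\lim_{t\to\infty}$ through $\tssum_j'$ I would use the $t$-uniform bound $|T_{k,j}(t)|\le|k|^{-4}$ (from $|\Phi|\le1$ and $\int_0^t\ex^{-(t-s)|k|^2}\ds\le|k|^{-2}$), so the summand is dominated by $|k-j|^{2\beta}|j|^2|\gamma_j|^2|k|^{-4}$; the hypothesis $\alpha<2\min\{\beta,-d\}-1$ is exactly what makes $\tssum_j'|k-j|^{2\beta}|j|^2|\gamma_j|^2$ finite, so dominated convergence yields the stated formula for $\lim_{t\to\infty}\EV|\vtt_k(t)|^2$. I expect the lattice-level bookkeeping in $\EV|\vtt_k(t)|^2$ (checking that off-diagonal and cross terms really vanish over the full $\Zp$, not just the half-space) and this interchange to be routine but error-prone; the genuine obstacle is the infinite-horizon contraction, where $\beta$ only below $-2$ makes $u\notin L^\infty$, so $\gb\cdot(u\psi)$ must be controlled by a Hölder--Sobolev estimate $\|u\psi\|_{L^2}\le\|u\|_{L^p}\|\psi\|_{L^q}$ and the function space (and any time weights) tuned so that the parabolic gain exactly absorbs the derivative while keeping the $[0,\infty)$ convolution integrable — the narrow window that the strict inequality $\beta<-2$ opens, and what \eqref{q:idr} quantifies.

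Finally, for the reduction to the static case: when $\sup_k\{\chi_k\}/\kappa^2\ll1$, every term beyond the first in the bracket is $\Oh(\chi_{k-j}/|k|^2)=\Oh(\sup_l\chi_l/\kappa^2)$ for $|k|\ge\kappa$, so the bracket equals $1$ up to such lower-order corrections and
\begin{equation*}
  \lim_{t\to\infty}\EV|\vtt_k(t)|^2=|k|^{-4}\tssum_j'\bigl[U_e^2(e_{k-j}\cdot j)^2+U_f^2(f_{k-j}\cdot j)^2\bigr]|k-j|^{2\beta}|\gamma_j|^2
\end{equation*}
up to those corrections. This is precisely $\EV|\vtt_k|^2$ for the static first iterate $\vtt=-\Delta^{-1}(u\cdot\gb\Delta^{-1}g)$; summing over the dyad $\kappa\le|k|<2\kappa$ and carrying out the angle average of $U_e^2(e_{k-j}\cdot j)^2+U_f^2(f_{k-j}\cdot j)^2$ and the lattice-to-integral approximation — exactly as in the derivation of \eqref{q:epkk} — recovers Theorem~\ref{t:static}, the contribution of $j$ with $|j|\gtrsim\kappa^{1/2}$ and all $\chi$-corrections being absorbed into the remainders $\rmd(\kappa)$ and $\delta\tht$.
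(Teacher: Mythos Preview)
Your computation of $\vtt_k(t)$, the collapse of $\EV|\vtt_k(t)|^2$ to a single $j$-sum via \eqref{q:tcov}, and the asymptotic expansion of the double time integral are all correct and match the paper. (The paper simply quotes the integration-by-parts identity \eqref{q:intlem} from \cite{jolly-dw:bht}; your change of variables $\sigma=t-s$, $\sigma'=t-s'$ followed by the diagonal split and repeated integration by parts is exactly how that lemma is proved.) The reduction to the static case is also the same.

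The one genuine difference is the contraction argument, which you flag as ``the genuine obstacle'' and leave only sketched. Your Duhamel/parabolic-smoothing route needs the loop to close in a space where you simultaneously control $\|u\psi\|_{L^2}$ without $u\in L^\infty$ and keep the time convolution integrable on $[0,\infty)$; as you note, this forces a delicate choice of norms. The paper sidesteps this entirely by working with the PDE form \eqref{q:aux30} rather than the mild form: multiply $(\dy_t-\Delta)\delta\tht^{(n)}=-u\cdot\gb\delta\tht^{(n-1)}$ by $\delta\tht^{(n)}$ in $L^2$, bound the right-hand side by
\[
   \|u\|_{L^3}\|\gb\delta\tht^{(n-1)}\|_{L^2}\|\delta\tht^{(n)}\|_{L^6}
   \le c\,\|u\|_{H^{1/2}}\|\gb\delta\tht^{(n-1)}\|_{L^2}\|\gb\delta\tht^{(n)}\|_{L^2}
\]
using the embeddings $H^{1/2}\hookrightarrow L^3$ and $H^1\hookrightarrow L^6$ in $d=3$, absorb $\|\gb\delta\tht^{(n)}\|_{L^2}^2$ into the dissipation, and integrate in $t$. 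This gives contraction in $L^2([0,t],H^1)$ with constant $c\,\|u\|_{L^\infty_t H^{1/2}}^2$, which is precisely \eqref{q:idr}; the needed regularity $u\in H^{1/2}$ follows from $\beta<-2$ (so $2\beta+d+1<0$). The energy method is cleaner here because the $H^1$ norm that you would have to build into your function space by hand appears for free from the dissipative term, and the estimate is automatically uniform in $t$ without time weights.
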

\section{Proofs}\label{s:pf}

\begin{proof}[Proof of Theorem~\ref{t:static}]
This consists of three main parts.
In the first part, we compute $\vtt$ and show that it satisfies \eqref{q:epkk} and \eqref{q:bdE}.
We then bound $\Var\|\Pkk\vtt\|^2$.
In the final part, we estimate $\tht^{(\infty)}-\tht^{(1)}$ to obtain \eqref{q:bddtht}.

\subsection{Computing $\vtt$}

We start {with the computation of} $\vtt=\theta^{(1)}-\theta^{(0)}=-\Delta^{-1}(u\cdot\gb\Delta^{-1}g)$, {which, when combined with the scaling in \eqref{Pucalc}, shows} that it satisfies the BHT scaling up to small remainders.
We use $g_r$ in \eqref{q:grdef}, as will be apparent shortly, with no loss of generality.
In some expressions (notably as exponents), we write $d=3$ and $\omega_3=4\pi$, to give a hint of how the analogues would appear in two dimensions.
From \eqref{q:grdef} and \eqref{q:udef}, we have
\begin{equation}\label{q:vttk}
  \vtt_k = \im\,|k|^{-2}\sum\nolimits_{j}'|k-j|^\beta\gamma_{j}[U_e(e_{k-j}\cdot j) V_{k-j} + U_f(f_{k-j}\cdot j) W_{k-j}] Z_{j}\,.
\end{equation}
In computing $\EV\vtt_k\overline{\vtt_k}$, we find factors of $\EV V_{k-j}\overline{V_{k-i}}$, which is
nonzero {if and only if} $k-j=k-i$, i.e.\ $j=i$.
An analogous reasoning applies to $\EV W_{k-j}\overline{W_{k\pm i}}$, so we have $\EV V_{k-i}\overline{V_{k-j}} = \EV W_{k-i}\overline{W_{k-j}}=\delta_{ij}$.
Recalling that $\EV V_j\overline{W_k}=0\;\forall j,k$ and, by independence of $V_j$ and $Z_k$, $\EV|V_j|^2|Z_k|^2=\EV|V_j|^2\EV|Z_k|^2$, we arrive at
\begin{equation}\label{q:evttk}
  \EV\vtt_k\overline{\vtt_k} = |k|^{-4}\tssum_j^{'}|k-j|^{2\beta}|\gamma_j|^2(U_e^2\xi_{kj}^2 + U_f^2\upsilon_{kj}^2)\EV|Z_j|^2 =: |k|^{-4}S_k
\end{equation}
where $\xi_{kj}:=e_{k-j}\cdot j$ and $\upsilon_{ki}:=f_{k-i}\cdot i$.
Since $\EV|Z_j|^2=1$, it is clear that this expression applies to both deterministic $g$ and random $g_r$.

We fix some $r\in(0,1)$; for concreteness, we put $r=\sfrac12$ in Theorem~\ref{t:static},
but write $r$ in this proof to indicate possible optimisation.
Consider any wavenumber dyad $[\kappa,2\kappa)$ with $\kappa>(2\kpg)^{1/r}$.
For any $k$ within this dyad, $\kappa\le|k|<2\kappa$, we split the sum in \eqref{q:evttk} into (here and below $S_k$ denotes a ``temporary variable'' with no global significance),
\begin{equation}\label{q:sk0}
   S_k = \tssum_{1\le|j|<\kappa^r} + \tssum_{\kappa^r\le|j|}
	=: S_k^{\ll} + S_k^{\gs}.
\end{equation}
We start with the last sum $S_k^\gs$, where by \eqref{q:gamma} and $e_{k-j}$ and $f_{k-j}$ being unit vectors,
\begin{equation}\label{q:auxb0}
  |\gamma_j|^2\le c_g^2|j|^{2\alpha},\qquad
  (e_{k-j}\cdot j)^2\le|j|^2
  \aand
  (f_{k-j}\cdot j)^2\le|j|^2.
\end{equation}
From $|j|\ge\kappa^r$, we have $|j|^{2\alpha+2}\le\kappa^{(2\alpha+2)r}$, so writing $m:=k-j$, we then replace the sum over $|j|\ge\kappa^r$ with one over $m\in\Zp$, {giving}
\intomargin
\begin{equation}
   \sum_{\kappa^r\le|j|}\,c_g^2U^2|j|^{2\alpha+2}|k-j|^{2\beta}
   \le c_g^2U^2\kappa^{(2\alpha+2) r}\sum_m{}^{'}\,|m|^{2\beta}
   \le c_g^2U^2\kappa^{(2\alpha+2) r}\frac{\omega_d}{|2\beta+d|}
\end{equation}
since $2\beta+d<0$.
{In the case of} bandwidth-limited source, $c_g=0$ in \eqref{q:gamma}, so this remainder term is zero.
{For $g$ with full spectrum, we sum} over our dyad {to obtain}
\begin{equation}
   \tssum_{\kappa\le|k|<2\kappa} |k|^{-4}S_k^\gs
	\le c_g^2U^2c(\alpha,\beta,d)\kappa^{(2\alpha+2)r-4+d}.
\end{equation}
For this to be dominated by $\kappa^{2\beta-1}$, we need $(2\alpha+2)r-4+d<2\beta-1$.
Putting $d=3$ and $r=\sfrac12$
gives the first term in \eqref{q:bdE}. 

The first sum in \eqref{q:sk0} is more delicate, requiring tight upper and lower bounds.
We start with a couple of preliminary estimates.
Writing $m:=k-j$ again, we bound
\begin{align}
   &|j| < \kappa^r \le |k|^r \le \sfrac12|k| \notag\\
   \Rightarrow\quad
   &|k-j|\ge|k|-|j|\ge\sfrac12|k| \notag\\
   \Rightarrow\quad
   &|j| \le |k|^r \le (2|k-j|)^r = 2^r|m|^r. \label{q:jkr}
\end{align}
We then bound $|k|^{-4}=|m+j|^{-4}$ from above and below subject to the constraints on $|j|$.
Noting that for $x\in(0,1)$, {by convexity} we can estimate
\begin{equation}
   (1+x)^{-4} %= 1 - 4x + 4\cdot5\,x^2/2! + \cdots 
   \ge 1-4x\;.
\end{equation}
This and \eqref{q:jkr} give us
\begin{align*}
   |m+j|^{-4} &\ge (|m|+|j|)^{-4} & &= |m|^{-4}(1+|j|/|m|)^{-4}\\
	&\ge |m|^{-4}(1-4|j|/|m|) & &\ge |m|^{-4}(1-2^{2+r}|m|^{r-1}).
\end{align*}
For the upper bound, we use the fact (readily seen by {convexity}), that for $x\in(0,\sfrac12]$
\begin{equation}\label{q:binombd}
   (1-x)^{-4} \le 1 + {30}x.
\end{equation}
Analogous reasoning then gives us
\begin{equation}\label{q:aux31}\begin{aligned}
   |m+j|^{-4} &\le (|m|-|j|)^{-4} & &= |m|^{-4}(1-|j|/|m|)^{-4}\\
	&\le |m|^{-4}(1+ {30}|j|/|m|) & &\le |m|^{-4}(1 + {30\cdot2^r}|m|^{r-1}).
\end{aligned}\end{equation}
From $\kappa\ge4\kpg^2$, we have $|j|<\kappa^r\le\kappa/4\le\sfrac14|k|<\sfrac13|k|$, so $\sfrac32|j|\le\sfrac12|k|$ and $|j|\le\sfrac12(|k|-|j|)\le\sfrac12|k-j|=\sfrac12|m|$, so we can use \eqref{q:binombd} in \eqref{q:aux31}.
We have thus shown that
\begin{equation}\label{q:bdmpj}
   |m|^{-4} - 8\,|m|^{r-5} \le |k|^{-4} \le |m|^{-4} + {60}\,|m|^{r-5}.
\end{equation}
This can be improved slightly by taking $r=(\beta-1)/(\alpha+1)$ instead of $\sfrac12$ and adjusting the constants.
We note that with finite-mode sources, there is no need to split $S_k$ and $|m+j|^{-4}$ is bounded by $|m|^{-4}\pm4\kpg|m|^{-5}$.

Instead of computing individual $S_k^\ll$, we proceed directly to the dyadic sum
\begin{equation}\label{q:aux2}
   \sum_{\kappa\le|k|<2\kappa} |k|^{-4} S_k^{\ll}
   = \sum_{1\le|j|<\kappa^r} |\gamma_j|^2 \sum_{\kappa\le|k|<2\kappa} |k|^{-4}|k-j|^{2\beta}(U_e^2\xi_{kj}^2+U_f^2\upsilon_{kj}^2).
\end{equation}
Defining spherical coordinates $(\rho,\vfi,\phi)$ w.r.t.\ $m$, i.e.
\begin{equation*}
   m = \rho (\sin\vfi\cos\phi,\sin\vfi\sin\phi,\cos\vfi)
	=: \rho\hat m,
\end{equation*}
we compute
\begin{equation}
  e_m\cdot j = j_x\sin\phi - j_y\cos\phi
  \text{ and }
  f_m\cdot j = j_x\cos\vfi\cos\phi + j_y\cos\vfi\sin\phi - j_z\sin\vfi.
\end{equation}
We approximate the $k$-sum by an integral over $m$, and in view of \eqref{q:aux31}, replace $|k|^{-4}$ by $|m|^{-4}$, so that
\intomargin
\begin{equation}\label{q:i1}\begin{aligned}
    & \tssum_{\kappa\le|k|<2\kappa}\,|k|^{-4}|k-j|^{2\beta}[U_e^2\xi_{kj}^2 + U_f^2\ups_{kj}^2] = \rmd_1(\kappa) + {}\\
    &\int_0^{2\pi}\!\!\int_0^\pi\!\!\int_{r_j(\kappa,\vfi,\phi)}^{r_j(2\kappa,\vfi,\phi)} [U_f^2(j_x\cos\vfi\cos\phi + j_y\cos\vfi\sin\phi - j_z\sin\vfi)^2\\
	&\hbox to110pt{}+ U_e^2(j_x\sin\phi-j_y\cos\phi)^2] \;\rho^{2\beta-2} \drho\sin\vfi\dvfi\dphi\\
    &=: \rmd_1(\kappa) + I_1(\kappa)
\end{aligned}\end{equation}
where $\rmd_1$ is a remainder to be bounded below, and where the radial limit $r_j(\lambda,\vfi,\phi)$, with $\lambda\in\{\kappa,2\kappa\}$, is determined by solving $|m+j|^2=\lambda^2$ for $\rho=|m|$,
\begin{equation}\label{q:rj}\begin{aligned}
  r_j(\lambda,\vfi,\phi) &= -j\cdot\hat m + \sqrt{\lambda^2-|j_\perp|^2}\qquad & &\text{with } |j_\perp|^2=|j|^2-(j\cdot\hat m)^2,\\
	&= \lambda - j\cdot\hat m - |j_\perp|^2/(2\lambda) + \cdots & &\text{for }\lambda\gg|j|.
\end{aligned}\end{equation}
Finally, we modify the region of integration, replacing the $\rho$-limit $r_j(\lambda,\vfi,\phi)$ by $\lambda$,
\intomargin
\begin{equation}\label{q:i1a}\begin{aligned}
    &\int_0^{2\pi}\!\!\int_0^\pi\!\!\int_{\kappa}^{2\kappa} [U_f^2(j_x\cos\vfi\cos\phi + j_y\cos\vfi\sin\phi - j_z\sin\vfi)^2\\
	&\hbox to110pt{}+ U_e^2(j_x\sin\phi-j_y\cos\phi)^2] \;\rho^{2\beta-2} \drho\sin\vfi\dvfi\dphi\\
    &\qquad= \kappa^{2\beta-1}i_2(2\beta-1)\Bigl[2\pi|j_h|^2 U_e^2 + \Bigl(\frac{2\pi}3|j|^2 + 2\pi j_z^2\Bigr)U_f^2\Bigr] =: I_2(\kappa)
\end{aligned}\end{equation}
where $i_2(s):=(2^s-1)/s$.
We write $\rmd_2(\kappa):=I_1(\kappa)-I_2(\kappa)$.

Our approximation for $\EV\|\Pkk\vtt\|_{L^2}^2$ is obtained by using $I_2(\kappa)$ in \eqref{q:aux2},
\begin{equation}
  \sum_{|j|<\kappa^r}|\gamma_j|^2 I_2 =
  \kappa^{2\beta-1}i_2(2\beta-1) \sum_{|j|<\kappa^r}|\gamma_j|^2 \Bigl[2\pi|j_h|^2 U_e^2 + \Bigl(\frac{2\pi}3|j|^2 + 2\pi j_z^2\Bigr)U_f^2\Bigr].
\end{equation}
In the isotropic case, $U_e=U_f\equiv U$, this reduces to
\intomargin
\begin{equation}\label{q:sll}
  \sum_{\kappa\le|k|<2\kappa} \!\!\!|k|^{-4}S_k^\ll
  = \kappa^{2\beta-1}\frac{U^2}{3\pi^2}i_2(2\beta-1)\|\Proj_{1,\kappa^r}\gb^{-1}g\|_{L^2}^2 + \sum_{1\le|j|<\kappa^r}|\gamma_j|^2(\rmd_1+\rmd_2)\end{equation}

We now bound the remainders $\rmd_1$ and $\rmd_2$.
The remainder $\rmd_2$ was incurred by replacing $r_j(\lambda,\vfi,\phi)$ in \eqref{q:i1} by $\lambda$ in \eqref{q:i1a}.
Now from \eqref{q:rj}
{since $\lambda \gg |j|$, we can bound
\begin{equation}
   |\lambda-r_j| \le \lambda+j\cdot \hat m - \sqrt{\lambda^2 +(j\cdot \hat m)^2 -|j|^2} \le \lambda + |j|-\sqrt{\lambda^2 -|j|^2}  \le 2|j|,
\end{equation}}
so we can bound $\rmd_2$ by integrating (a bound on the integrand) over two spherical shells of thickness $4|j|\le4\kappa^r$ at $\lambda=\kappa$ and $2\kappa$.
Bounding the integrand by $U^2|j|^2\rho^{2\beta-2}$, which is largest (since $\beta-1<0$) for smallest $\rho$, we have
\begin{align*}
   |\rmd_2(\kappa)| &\le \omega_d \sum\nolimits_{\lambda\in\{\kappa,2\kappa\}}\int_{\lambda-2\kappa^r}^{\lambda+2\kappa^r} U^2|j|^2 \rho^{2\beta-2} \drho  \\
   & \le \omega_d U^2|j|^2 4\kappa^{r}\left[(\kappa-2\kappa^r)^{2\beta-2}+(2\kappa-2\kappa^r)^{2\beta-2} \right] \\
	&\le \omega_d U^2|j|^2 8 \kappa^{r}(\kappa-2\kappa^r)^{2\beta-2} \\
	&\le  2^{5-2\beta}\omega_d U^2|j|^2 \kappa^{2\beta-2+r}.
\end{align*}
Therefore, bounding $\|\Proj_{1,\kappa^r}\gb^{-1}g\|_{L^2}^2\le\|\gb^{-1}g\|_{L^2}^2$,
\begin{equation}\label{q:E2}
   \tssum_{1\le|j|<\kappa^r}\,|\gamma_j|^2|\rmd_2(\kappa)|
	\le {\frac{2^{2-2\beta}}{\pi^3}}\omega_d U^2\kappa^{2\beta-2+r}\|\gb^{-1}g\|_{L^2}^2.
\end{equation}
Next, the remainder $\rmd_1$ incurred in \eqref{q:i1} is bounded by replacing $|m|^{-4}$ there by ${60}|m|^{r-5}$,
giving [cf.\ the first term in \eqref{q:sll}]
\begin{equation}\label{q:E1}
   \tssum_{|j|<\kappa^r}'\,|\gamma_j|^2\rmd_1(\kappa)
	\le c(\beta,d,r)\kappa^{2\beta+r-2} U^2\|\gb^{-1}g\|_{L^2}^2.
\end{equation}
Together \eqref{q:E2}--\eqref{q:E1} give the second term in \eqref{q:bdE}.
We note that this took more work than in two dimensions, where the simpler ``geometric term'' $k\wedge j=k_xj_y-k_yj_x$ {in \cite{jolly-dw:bht}} allowed direct integration in $k$ rather than having to shift to $m=k-j$.

\subsection{Upper Bound for $\Var\|\Pkk\vtt\|^2$}

For this, we take $U_e=U_f=U$.
To bound the variance, we first compute
\begin{equation}\label{q:aux41}
   \EV\,{\|}\Pkk\vtt\|_{L^2}^4 = \EV\,\tssum_{kl}\,|\vtt_k|^2|\vtt_l|^2
	=: U^4\tssum_{kl}\,|k|^{-4}|l|^{-4}\EV\,|\vfi_k|^2|\vfi_l|^2
\end{equation}
where, here and in the rest of this subsection, $\tssum_{kl}$ is taken over $|k|$, $|l|\in[\kappa,2\kappa)$.
Assuming $g$ is deterministic, we have
\begin{equation}\label{q:aux40}\begin{aligned}
   \EV\,|\vfi_k|^2|\vfi_l|^2 = &\tssum_{ijmn}'\,|k-i|^\beta|l-j|^\beta|l-m|^\beta|k-n|^\beta\\
	&\bigl\{\,\xi_{ki}\xi_{lj}\xi_{lm}\xi_{kn}\,\EV\,V_{k-i} V_{l-j}\overline{V_{l-m} V_{k-n}}\,\gamma_i\gamma_j\overline{\gamma_m\gamma_n} + (*)'\\
	&+ \xi_{ki}\xi_{lj}\ups_{lm}\ups_{kn}\,\EV\,V_{k-i} V_{l-j}\overline{W_{l-m} W_{k-n}}\,\gamma_i\gamma_j\overline{\gamma_m\gamma_n} + (*)'\\
	&+ \xi_{ki}\ups_{lj}\xi_{lm}\ups_{kn}\,\EV\,V_{k-i} W_{l-j}\overline{V_{l-m} W_{k-n}}\,\gamma_i\gamma_j\overline{\gamma_m\gamma_n} + (*)'\\
	&+ \xi_{ki}\ups_{lj}\ups_{lm}\xi_{kn}\,\EV\,V_{k-i} W_{l-j}\overline{W_{l-m} V_{k-n}}\,\gamma_i\gamma_j\overline{\gamma_m\gamma_n} + (*)'\bigr\}
\end{aligned}\end{equation}
where $(*)'$ denotes the preceeding term with $\xi\leftrightarrow\ups$ and $V\leftrightarrow W$ swapped (but not their indices).

We start with the last term: here $\EV\,V_{k-i} W_{l-j}\overline{W_{l-m} V_{k-n}} = \EV\,V_{k-i}\overline{V_{k-n}}\,\EV\, W_{l-j}\overline{W_{k-n}}\ne0$ only when $k-i=k-n$ and $l-j=l-m \Iff n=i$ and $j=m$.  This last term then contributes
\begin{equation}
   S_{kl}^{(p)} = \tssum_{ij}'\,|k-i|^{2\beta}|l-j|^{2\beta} \xi_{ki}^2\ups_{lj}^2 |\gamma_i|^2|\gamma_j|^2.
\end{equation}
{To reduce clutter, we now write $q:=k-l$ and $r:=k+l$.}
In the penultimate term, $\EV\cdots = \EV V_{k-i}\overline{V_{l-m}}\,\EV W_{l-j}\overline{W_{k-n}}\ne0$ only if $k-i={l-m} \Iff m=i-q$ and $l-j=k-n \Iff n=j+q$, thus contributing
\begin{equation}
   S_{kl}^{(o)} = \tssum_{ij}'\,|k-i|^{2\beta}|l-j|^{2\beta} \xi_{ki}\xi_{l,i-q} \ups_{lj}\ups_{k,j+q}\, \gamma_i\gamma_j\overline{\gamma_{i-q}\gamma_{j+q}}.
\end{equation}
Similarly, in the second term $\EV\cdots=\EV V_{k-i} V_{l-j}\,\EV \overline{W_{l-m} W_{k-n}}\ne0$ only if $j-l=k-i \Iff j=-i+r$ and $n-k=l-m \Iff n=-m+r$, contributing (upon relabelling $m\mapsto j$)
\begin{equation}
   S_{kl}^{(h)} = \tssum_{ij}'\,|k-i|^{2\beta}|l-j|^{2\beta} \xi_{ki}\xi_{l,-i+r} \ups_{lj}\ups_{k,-j+r}\,\gamma_i\overline{\gamma_{i-r}\gamma_j}\gamma_{j-r}.
\end{equation}

The first term is the most involved.
Denoting $-y\ne x\ne y$ by $x\ne_\pm y$, the factor $\EV V_{k-i}V_{l-j}\overline{V_{l-m}V_{k-n}}\ne0$ only in the following cases:
\begin{align}
  &k-i = k-n \ne_\pm l-j = l-m & &\Iff & &i=n \ne j+q=m+q, \tag{a}\\
  &k-i = l-m \ne_\pm l-j = k-n & &\Iff & &i=m+q \ne n=j+q, \tag{b}\\
  &k-i = j-l \ne_\pm k-n = m-l & &\Iff & &j=-i+r \ne m=-n+r, \tag{c}\\
  &k-i = k-n = l-j = l-m & &\Iff & &i=n = j+q=m+q, \tag{d}\\
  &k-i = j-l = k-n = m-l & &\Iff & &j=m = -i+r=-n+r, \tag{e}\\
  &k-i = l-m = j-l = n-k & &\>\Rightarrow & &m=i-q,\; j=-i+r,\; n=-i+2k. \tag{f}
\end{align}
In cases (a)--(c), the $\EV\cdots=1$, while in cases (d)--(f), the $\EV\cdots=\ppt$.
Imposing these conditions in \eqref{q:aux40}, the first term is $S_{kl}^{(a)} + S_{kl}^{(b)} + S_{kl}^{(c)} + \ppt S_{kl}^{(d)} + \ppt S_{kl}^{(e)} + \ppt S_{kl}^{(f)}$, where (in all these sums, $k$, $l$, $q=k-l$ and $r=k+l$ are fixed)
\begin{align}
  S^{(a)}_{kl} &= \tssum_{|k-i|\ne|l-j|}'\,|k-i|^{2\beta}|l-j|^{2\beta}\xi_{ki}^2\xi_{lj}^2|\gamma_i|^2|\gamma_j|^2\\
  S^{(b)}_{kl} &= \tssum_{|k-i|\ne|l-j|}'\,|k-i|^{2\beta}|l-j|^{2\beta}\xi_{ki}\xi_{k,j+q}\xi_{lj}\xi_{l,i-q}\, \gamma_i\gamma_j\overline{\gamma_{i-q}\gamma_{j+q}}\\
  S^{(c)}_{kl} & = \tssum_{|k-i|\ne|l-m|}'\,|k-i|^{2\beta}|l-m|^{2\beta}\xi_{ki}\xi_{k,r-m}\xi_{lm}\xi_{l,r-i}\,\gamma_i\gamma_{r-i}\overline{\gamma_m\gamma_{r-m}}\notag\\
	&= \tssum_{|k-i|\ne|l-j|}'\,|k-i|^{2\beta}|l-j|^{2\beta} \xi_{ki}\xi_{l,r-i}\xi_{k,r-j}\xi_{lj}\,\gamma_i\overline{\gamma_{i-r}\gamma_j}\gamma_{j-r},\\
  S^{(d)}_{kl} &= \tssum_{i}'\,|k-i|^{4\beta}\xi_{ki}^2\xi_{l,i-q}^2\,|\gamma_i|^2|\gamma_{i-q}|^2\\
  S^{(e)}_{kl} &= \tssum_i'\,|k-i|^{4\beta}\xi_{ki}^2\xi_{l,{r-i }}^2\,|\gamma_i|^2|\gamma_{r-i}|^2\\
  S^{(f)}_{kl} &= \tssum_{i}'\,|k-i|^{4\beta}\xi_{ki}\xi_{l,r-i}\xi_{l,i-q}\xi_{k,2k-i}\,\gamma_i\gamma_{i-2k}\overline{\gamma_{i-q}\gamma_{i-r}}.
\end{align}
Analogously, the first $(*)'$ in \eqref{q:aux40} is $S_{kl}^{(a')} + \cdots + \ppt S_{kl}^{(f')}$ with $\ups$ replacing $\xi$.

Returning to the variance, we have
\begin{equation}\begin{aligned}
   \Var\|\Pkk\vtt\|_{L^2}^2 &= \EV\,\|\Pkk\vtt\|_{L^2}^4 - \bigl(\EV\,\|\Pkk\vtt\|_{L^2}^2\bigr)^2\\
	&= U^4 \tssum_{kl}\,|k|^{-4}|l|^{-4} \bigl( \EV\,|\vfi_k|^2|\vfi_l|^2 - \EV|\vfi_k|^2\EV|\vfi_l|^2\bigr).
\end{aligned}\end{equation}
Now
\begin{equation}
   \EV|\vfi_k|^2\,\EV|\vfi_l|^2 = S_{kl}^{(a)} + S_{kl}^{(d)} + S_{kl}^{(e)} + S_{kl}^{(a')} + S_{kl}^{(d')} + S_{kl}^{(e')} + 2 S_{kl}^{(p)}
\end{equation}
where the factor of $2$ on $S_{kl}^{(p)}$ came from its $(*)'$.
This gives us
\begin{equation}\begin{aligned}
    \Var\|\Pkk\vtt\|_{L^2}^2 = U^4 &\tssum_{kl}\,|k|^{-4}|l|^{-4} \bigl( S_{kl}^{(b)} + S_{kl}^{(c)} + S_{kl}^{(b')} + S_{kl}^{(c')}\\
	&+ (\ppt-1) \bigl[ S_{kl}^{(d)} + S_{kl}^{(e)} + S_{kl}^{(d')} + S_{kl}^{(e')} \bigr]
	+ \ppt \bigl[ S_{kl}^{(f)} + S_{kl}^{(f')} \bigr]\\
	&+ S_{kl}^{(h)} + S_{kl}^{(o)} + S_{kl}^{(h')} + S_{kl}^{(o')} \bigr)
\end{aligned}\end{equation}
where $S_{kl}^{(h')}$ is $S_{kl}^{(h)}$ with $\xi$ and $\ups$ (but not their indices) swapped, arising from the $(*)'$ of the $(h)$ term in \eqref{q:aux40}, and analogously for $S_{kl}^{(o')}$.
So far, no approximation has been made, nor has the finite-mode source assumption been used.

We now invoke the assumption that $\gamma_j=0$ whenever $|j|\ge\kpg$.
Since $|k|\gg\kpg$, only one of $\gamma_i$ and $\gamma_{i-2k}$ can be non-zero, so the factor $\gamma_i\gamma_{i-2k}$ in $S^{(f)}_{kl}$ vanishes, killing the term; obviously $S_{kl}^{(f')}=0$ as well.

Next, we treat the contribution of $S_{kl}^{(d)}$: due to the terms $|\gamma_i|^2|\gamma_{i-q}|^2$, we must have $|q|<2\kpg$ for the terms containing it to be non-zero.
Rewriting the $l$-sum over $q=k-l$ and using the fact that $|k|\gg\kpg$, we approximate $|k-q|\simeq|k|\simeq|k-i|$ and bound $|\xi_{ki}|=|e_{k-i}\cdot i|\le|i|$ to get
\begin{equation}\begin{aligned}
   \tssum_{kl}\,|k|^{-4}&|l|^{-4}\,S_{kl}^{(d)}
	= \tssum_{kq}\,|k|^{-4}|k-q|^{-4}\tssum_i'\,|k-i|^{4\beta}\xi_{ki}^2\xi_{k-q,i-q}^2\,|\gamma_i|^2|\gamma_{i-q}|^2\\
	&\lesssim \tssum_{k}\,|k|^{4\beta-8}\tssum_{iq}'\,|i|^2|i-q|^2|\gamma_i|^2|\gamma_{i-q}|^2\\
	&= \tssum_k\,|k|^{4\beta-8}\tssum_i'\,|i|^2|\gamma_i|^2\tssum_q'|i-q|^2|\gamma_{i-q}|^2\\
	&= \Bigl(\tssum_k\,|k|^{4\beta-8}\Bigr)\Bigl(\tssum_i'\,|i|^2|\gamma_i|^2\Bigr)^2\\
	&= {\frac{\omega_3}{(2\pi)^{6}}} i_2(4\beta+d-8)\kappa^{4\beta+d-8}\|\gb^{-1}g\|_{L^2}^4.
\end{aligned}\end{equation}
where for the penultimate equality we have changed the last $\sum_q$ to go over $n=i-q$ and re-labelled.
Since we can bound $|\ups_{ki}|=|f_{k-i}\cdot i|\le|i|$ as with $|\xi_{ki}|$, this bound also holds for the contribution of $S_{kl}^{(d')}$.
An analogous argument gives us the bound
\begin{equation}
   \tssum_{kl}\,|k|^{-4}|l|^{-4}\bigl( S_{kl}^{(e)} + S_{kl}^{(e')} \bigr)
	\lesssim {\frac{2\omega_3}{(2\pi)^{6}}} i_2(4\beta+d-8)\kappa^{4\beta+d-8}\|\gb^{-1}g\|_{L^2}^4.
\end{equation}

We bound the contribution of $S_{kl}^{(b)}$ as follows:
\begin{equation}\begin{aligned}
   \tssum_{kl}\,&|k|^{-4}|l|^{-4}\tssum_{ij}'\,|k-i|^{2\beta}|l-j|^{2\beta}\xi_{ki}\xi_{k,j+q}\xi_{lj}\xi_{l,i-q}\,\gamma_i\gamma_j\overline{\gamma_{i-q}\gamma_{j+q}}\\
	&= \tssum_{kq}\,|k|^{-4}|k-q|^{-4}\tssum_{ij}'\,|k-i|^{2\beta}|k-q-j|^{2\beta}\cdots\\
	&\lesssim \tssum_{k}\,|k|^{4\beta-8}\,\tssum_{ij}'\,|i|\,|\gamma_i|\,|j|\,|\gamma_j| \tssum_q'|i-q|\,|\gamma_{i-q}|\,|j+q|\,|\gamma_{j+q}|\\
	&\le \sfrac12\,\tssum_{k}\,|k|^{4\beta-8}\,\tssum_{ij}'\,|i|\,|\gamma_i|\,|j|\,|\gamma_j| \tssum_q'\bigl(|i-q|^2|\gamma_{i-q}|^2 + |j+q|^2|\gamma_{j+q}|^2\bigr)\\
	&= \tssum_k\,|k|^{4\beta-8}\,\tssum_i'\,|i|\,|\gamma_i|\,\tssum_j'\,|j|\,|\gamma_j|\,\tssum_n'\,|n|^2|\gamma_n|^2\\
	&= {\frac{\omega_3}{(2\pi)^{3}}} i_2(4\beta+d-8)\kappa^{4\beta+d-8}\,\ggfosq{\gb^{-1}g}\|\gb^{-1}g\|_{L^2}^2\,.
\end{aligned}\end{equation}
Obviously this bound also applied to the contribution of $S_{kl}^{(b')}$, and by inspection, also to those of $S_{kl}^{(o)}$ and $S_{kl}^{(o')}$.
Similarly, we bound
\begin{equation}\begin{aligned}
   \tssum_{kl}\,&|k|^{-4}|l|^{-4}\tssum_{ij}'\,|k-i|^{2\beta}|l-j|^{2\beta}\xi_{ki}\xi_{l,r-i}\xi_{lj}\xi_{k,r-j}\,\gamma_i\overline{\gamma_{i-r}\gamma_j}\gamma_{j-r}\\
	&= \tssum_{kr}\,|k|^{-4}|r-k|^{-4}\tssum_{ij}'\,|k-i|^{2\beta}|r-k-j|^{2\beta}\cdots\\
	&\lesssim \tssum_{k}\,|k|^{4\beta-8}\,\tssum_{ij}'\,|i|\,|\gamma_i|\,|j|\,|\gamma_j| \tssum_r'|i-r|\,|\gamma_{i-r}|\,|j-r|\,|\gamma_{j-r}|\\
	&\le \sfrac12\,\tssum_{k}\,|k|^{4\beta-8}\,\tssum_{ij}'\,|i|\,|\gamma_i|\,|j|\,|\gamma_j| \tssum_r'\bigl(|i-r|^2|\gamma_{i-r}|^2 + |j-r|^2|\gamma_{j-r}|^2\bigr)\\
	&= \tssum_k\,|k|^{4\beta-8}\,\tssum_i'\,|i|\,|\gamma_i|\,\tssum_j'\,|j|\,|\gamma_j|\,\tssum_n'\,|n|^2|\gamma_n|^2\\
	&= {\frac{\omega_3}{(2\pi)^{3}}} i_2(4\beta+d-8)\kappa^{4\beta+d-8}\,\ggfosq{\gb^{-1}g}\|\gb^{-1}g\|_{L^2}^2\,,
\end{aligned}\end{equation}
with the same bound applying for $S_{kl}^{(c')}$, $S_{kl}^{(h)}$ and $S_{kl}^{(h')}$.
Putting everything together gives
\intomargin
\begin{equation}
   \Var\|\Pkk\vtt\|_{L^2}^2 \lesssim U^4{\frac{\omega_3}{(2\pi)^3}} i_2(4\beta-5) \kappa^{4\beta-5} \|\gb^{-1}g\|_{L^2}^2 \bigl\{ 8 \ggfosq{\gb^{-1}g} + {\frac{4}{(2\pi)^3}}(\ppt-1) \|\gb^{-1}g\|_{L^2}^2 \bigr\},
\end{equation}
whence follows \eqref{q:ubV}.

\subsection{Bounding $\tht^{(\infty)}-\tht^{(1)}$}

Finally, we bound the remainder $\tht^{(\infty)}-\tht^{(1)}$ in each dyad ...
As before, we write $d=3$ and $\omega_d=4\pi$ to make it easier to adapt the proof to two dimensions.
We start by obtaining a bound for $|\vtt_k|$.
From \eqref{q:vttk}, we have
\begin{equation}
   |\vtt_k| \le 2U\mxr|k|^{-2}\tssum_j'\,|k-j|^\beta|j|\,|\gamma_j|
	=: 2U\mxr|k|^{-2}S_k.
\end{equation}
When $|k|<2\kpg$, we have using \eqref{q:gdef} and bounding $|k-j|^\beta\le1$,
\begin{equation}
   S_k \le \tssum_j' |j|\,|\gamma_j| = \ggfo{\gb^{-1}g}.
\end{equation}
For the case $|k|\ge2\kpg$, we split the sum into four parts [cf.\ \eqref{q:sk0}]
\begin{equation}\label{q:sk}
   S_k = \tssum_{|j|<\kpg}' + \tssum_{\kpg\le|j|<|k|^r} + \tssum_{|k|^r\le|j|<2|k|} + \tssum_{2|k|\le|j|}
	=: S_k^g + S_k^\ll + S_k^\simeq + S_k^>.
\end{equation}
For $S_k^g$, since $|j|<\kpg$ and $|k|\ge2\kpg$ we have $|k-j|\ge|k|-|j|\ge\sfrac12|k|$ and thus $|k-j|^\beta\le2^{-\beta}|k|^\beta$; this gives
\begin{equation}
   S_k^g \le 2^{-\beta}|k|^\beta \tssum_{|j|<\kpg}'\,|j|\,|\gamma_j| \le 2^{-\beta}|k|^\beta\ggfo{\gb^{-1}g}.
\end{equation}
Similarly for $S_k^\ll$, since $|j|<|k|^r\le\sfrac12|k|$ (the latter holds since $2\le\kpg^{1-r}$), we again have $|k-j|\ge|k|-|j|\ge\sfrac12|k|$ and, since $\alpha+1+d<0$,
\begin{equation}
   S_k^\ll \le 2^{-\beta}|k|^\beta c_g\tssum_{\kpg\le|j|<|k|^r}\,|j|^{\alpha+1} \le 2^{-\beta}|k|^\beta c_g\omega_d\kpg^{\alpha+1+d}/|\alpha+1+d|.
\end{equation}
For $S_k^\simeq$, we use $|j|<2|k|$ to bound $|k-j|\le|k|+|j|<3|k|$ and change the sum over $j$ to (a larger) one over $m=k-j$; using $|j|\ge|k|^r$ to bound $|j|^{\alpha+1}\le|k|^{(\alpha+1)r}$, we then get (assuming $\beta+d\ne0$, a harmless special case)
\begin{equation}\label{q:j2k}
   S_k^\simeq \le c_g|k|^{(\alpha+1)r}\sum_{1\le|m|<3|k|}\,|m|^\beta
	\le c_g\omega_d|k|^{(\alpha+1)r}\frac{(3|k|)^{\beta+d}-1}{\beta+d}.
\end{equation}
If $\beta+d>0$, the fraction is bounded by $(3|k|)^{\beta+d}/(\beta+d)$,
and for the rhs to be $\Oh(|k|^\beta)$, we need $(\alpha+1)r+d\le0$.
If $\beta+d<0$, the fraction is bounded by $1/|\beta+d|$,
and for the rhs to be $\Oh(|k|^\beta)$, we need $(\alpha+1)r\le\beta$.
Either way, we need $(\alpha+1)r\le\min\{\beta,-d\}$.
For $S_k^>$, we use $|j|\ge2|k|$ to bound $|j-k|\ge|j|-|k|\ge\sfrac12|j|$ and $|k-j|^\beta\le2^{-\beta}|j|^\beta$, to get
\begin{equation}
   S_k^> \le 2^{-\beta}c_g \tssum_{2|k|\le|j|}\,|j|^{\alpha+\beta+1}
	\le 2^{-\beta}c_g\omega_d\,(2|k|)^{\alpha+\beta+d+1}/|\alpha+\beta+d+1|
\end{equation}
since $\alpha+\beta+d+1<0$;
for the rhs to be $\Oh(|k|^\beta)$, we need $\alpha+d+1\le0$.
Putting these together, we can write
\begin{equation}\label{q:bdvttk}
   |\vtt_k| \le U\mxr c_1(g,\beta,d)|k|^{-2}K_\beta(|k|)
\end{equation}
where
\begin{equation}\label{q:kbeta}
   K_\beta(|k|) := \min\{1,(2\kpg)^{-\beta}|k|^\beta\}.
\end{equation}
Summing over $k$, this gives $\|\Pkk\vtt\|_{L^2}^2\le c\,\kappa^{2\beta-4+d}U^2\mxr^4\ggfosq{\gb^{-1}g}$
for large $\kappa$, i.e.\ the same $\kappa^{2\beta-1}$ dependence as $\EV\|\Pkk\vtt\|_{L^2}^2$ albeit with a worse constant and dependence on $\ggfosq{\gb^{-1}g}$ instead of $\|\gb^{-1}g\|_{L^2}^2$.

Writing $\delta\tht^{(n)}:=\tht^{(n)}-\tht^{(n-1)}$ with $\delta\tht^{(1)}=\vtt$, we have from \eqref{q:sfpi}
\begin{equation}\begin{aligned}
   &\delta\tht^{(n+1)} = \Delta^{-1}(u\cdot\gb\delta\tht^{(n)})\\
   \Rightarrow\quad
   &|\delta\tht^{(n+1)}_k| \le 2U\mxr\,|k|^{-2}\tssum_j'\,|k-j|^\beta|j|\,|\delta\tht^{(n)}_j|.
\end{aligned}\end{equation}
Therefore, since $\delta\tht^{(1)}=\vtt$ is already bounded in \eqref{q:bdvttk}, if we can show that
\begin{equation}\label{q:skbeta}
   S_k := \tssum_j'\,|k-j|^\beta|j|^{-1}K_\beta(|j|) \le \varpi\,K_\beta(|k|),
\end{equation}
we can, by choosing
\begin{equation}\label{q:umax}
   4U\mxr\varpi =: U/U_{max} =: \eps < 1,
\end{equation}
ensure the mode-wise convergence
\intomargin
\begin{equation}\label{q:tht18}\begin{aligned}
   |\tht^{(1)}_k-\tht^{(\infty)}_k| &= |\delta\tht^{(2)}_k+\delta\tht^{(3)}_k+\cdots|
	\le |\delta\tht^{(2)}_k| + |\delta\tht^{(3)}_k| + \cdots\\
	&\le \frac{c_1\eps}2|k|^{-2} K_\beta(|k|) + \frac{c_1\eps^2}4|k|^{-2} K_\beta(|k|) + \cdots
	\le c_1\eps\,|k|^{-2} K_\beta(|k|).
\end{aligned}\end{equation}

To show \eqref{q:skbeta}, we first consider $|k|<4\kpg$.
We split the sum as
\begin{equation}
   \tssum_j'\,|k-j|^\beta|j|^{-1}K_\beta(|j|)
	= \tssum_{|j|<8\kpg}' + \tssum_{8\kpg\le|j|}
	=: S_k^g + S_k^>.
\end{equation}
For $S_k^g$, we simply bound $|k-j|^\beta K_\beta(|j|)\le1$ to get
\begin{equation}
   S_k^g \le \tssum_{|j|<8\kpg}'\,|j|^{-1}
	\le \omega_d(8\kpg)^{d-1}/(d-1).
\end{equation}
As for $S_k^>$, $|k|<4\kpg$ and $|j|\ge8\kpg$ implies that $|j-k|\ge|j|-|k|\ge\sfrac12|j|$ and thus $|k-j|^\beta\le2^{-\beta}|j|^\beta$, leading to
\begin{equation}
   S_k^> \le 2^{-\beta}\tssum_{8\kpg\le|j|}\,|j|^{2\beta-1}
	\le 2^{-\beta}\omega_d(8\kpg)^{2\beta+d-1}/|2\beta+d-1|
\end{equation}
since $2\beta+d-1<0$.
Together $S_k^g$ and $S_k^>$ give
\begin{equation*}
   S_k\le 2\omega_d\max\{2^{-\beta-1}/|\beta+1|,\, 32\kpg^2\}\,K_\beta(|k|)\quad \text{for all }|k|<4\kpg.
\end{equation*}

For the case $|k|\ge4\kpg$, we split the sum four ways
\begin{equation}\begin{aligned}
   S_k &= \tssum_{|j|<2\kpg}' + \tssum_{2\kpg\le|j|<\frac12|k|} + \tssum_{\frac12|k|\le|j|<2|k|} + \tssum_{2|k|\le|j|}\\
	&=: S_k^1 + S_k^< + S_k^\simeq + S_k^>.
\end{aligned}\end{equation}
For $S_k^1$, since $|j|<2\kpg$ and $|k|\ge4\kpg$, we have $|k-j|\ge|k|-|j|\ge\sfrac12|k|$ and $|k-j|^\beta\le2^{-\beta}|k|^\beta$, giving
\begin{equation}
   S_k^1 \le 2^{-\beta}|k|^\beta\tssum_{|j|<2\kpg}'|j|^{-1}
	\le 2^{-\beta}|k|^\beta\omega_d (2\kpg)^{d-1}/|d-1|.
\end{equation}
Next, for $S_k^<$, since $|j|<\sfrac12|k|$ and thus $|k-j|\ge|k|-|j|\ge\sfrac12|k|$, we have
\begin{equation}
   S_k^< \le {{4\kappa_g}}^{-\beta}|k|^\beta\tssum_{2\kpg\le|j|}\,|j|^{\beta-1}
	\le 2^{-\beta}|k|^\beta\omega_d(2\kpg)^{\beta+d-1}/|\beta+d-1|
\end{equation}
assuming that $\beta+d-1<0$.
For $S_k^\simeq$, we change the summation variable from $j$ to $m=k-j$ as in \eqref{q:j2k} and use $|j|\ge\sfrac12|k|$ and thus $|j|^{\beta-1}\le2^{1-\beta}|k|^{\beta-1}$, to obtain (again assuming $\beta+d\ne0$)
\begin{equation}
   S_k^\simeq \le {\kappa_g^{-\beta}}2^{1-\beta}|k|^{\beta-1}\tssum_{|m|<3|k|}'\,|m|^\beta
	\le {\kappa_g^{-\beta}}2^{1-\beta}|k|^{\beta-1}\omega_d\frac{(3|k|)^{\beta+d}-1}{\beta+d}.
\end{equation}
If $\beta+d>0$, the rhs is $\Oh(|k|^\beta)$ if $\beta+d-1\le0$.
Finally, for $S_k^>$, we use $|j|\ge2|k|$ to bound $|j-k|\ge|j|-|k|\ge\sfrac12|j|$ and $|k-j|^\beta\le2^{-\beta}|j|^\beta$; this gives us
\begin{equation}
   S_k^> \le 2^{-\beta}\tssum_{2|k|\le|j|}\,|j|^{2\beta-1}
	\le 2^{-\beta}\omega_d(2|k|)^{2\beta+d-1}/|2\beta+d-1|.
\end{equation}
For the rhs to be $\Oh(|k|^\beta)$, we need $\beta+d-1\le0$.
Subject to the assumptions below, we have thus established \eqref{q:skbeta}.

Summing \eqref{q:tht18}, we have
\begin{equation}
   \|\Pkk\delta\tht\|_{L^2}^2 \le {8\pi^3 \varepsilon^2}c_1^2\,\tssum_{\kappa\le|k|<2\kappa}\,|k|^{-4}{K_\beta^2(|k|)}^2
	\le c_2(\cdots)|k|^{2\beta+d-4}
\end{equation}
with the second inequality valid since $|k|\ge2\kpg$.

Collecting our assumptions, we need
\begin{align}
   &4\le\kpg^{1-r}, & &\\
   &(\alpha+1)r \le \min\{\beta,-d\} & &(\text{obviating }\alpha+d+1<0),\\
   &\beta+d-1 < 0. & &
\end{align}
\end{proof}

\begin{proof}[Proof of Theorem~\ref{t:main}]
The proof of the theorem is similar to that of the 2d case \cite{jolly-dw:bht}, with some improvements (e.g., requiring less regularity on $u$).
As in the proof of Theorem~\ref{t:static}, we shall often write $d=3$ to help possible adaptation to $d=2$.

Thanks to the boundedness of $V_k$ and $W_k$, we have from \eqref{q:udef}
\begin{equation}\begin{aligned}
   \|u(\cdot,t)\|_{H^{1/2}}^2 &\le {8\pi^3} \tssum_k'\,|k|^{2\beta+1}(U_e^2|V_k(t)|^2 + U_f^2|W_k(t)|^2)\\
	&\le U^2\mxr^2\tssum_k'\,|k|^{2\beta+1}
	\le U^2\mxr^2/(2\beta+d+1)
\end{aligned}\end{equation}
for all $t$, assuming that $2\beta+d+1<0$.

Considering the iteration \eqref{q:itn} as a mapping $T:\tht^{(n)}\mapsto\tht^{(n+1)}$, convergence of the iterations \eqref{q:it0}--\eqref{q:itn} would follow from the contractivity of $T$.
To prove the latter, we write $\delta\tht^{(n)}:=\tht^{(n)}-\tht^{(n-1)}$ and observe that it satisfies
\begin{equation}\label{q:aux30}
   (\dy_t - \Delta)\delta\tht^{(n)} = -(u\cdot\gb)\delta\tht^{(n-1)}
   \quad\text{with}\quad \delta\tht^{(n)}(\cdot,0) = 0.
\end{equation}
Multiplying this by $\delta\tht^{(n)}$ in $L^2(\Dom)$, we find
\begin{equation}
   \frac12\ddt{\;}\|\delta\tht^{(n)}\|_{L^2}^2 + \|\gb\delta\tht^{(n)}\|_{L^2}^2
	= -({(u\cdot \gb)}\delta\tht^{(n-1)}),\delta\tht^{(n)})_{L^2}^{}.
\end{equation}
{We next bound the contribution from the advected term as}
\begin{equation}\begin{aligned}
   \bigl|(u\cdot\gb\delta\tht^{(n-1)},\delta\tht^{(n)})_{L^2}^{}\bigr|
	&\le \|u\|_{L^3}^{}\|\gb\delta\tht^{(n-1)}\|_{L^2}^{}\|\delta\tht^{(n)}\|_{L^6}^{}\\
	&\le c\,\|u\|_{H^{1/2}}^{}\|\gb\delta\tht^{(n-1)}\|_{L^2}^{}\|\gb\delta\tht^{(n)}\|_{L^2}^{}\\
	&\le \sfrac12\|\gb\delta\tht^{(n)}\|_{L^2}^2 + c\,\|u\|_{H^{1/2}}^2\|\gb\delta\tht^{(n-1)}\|_{L^2}^2\,.
\end{aligned}\end{equation}
Integrating \eqref{q:aux30} in time, we find
\begin{equation}\begin{aligned}
   \|\delta\tht^{(n)}(t)\|_{L^2}^2 &+ \int_0^t \|\gb\delta\tht^{(n)}(s)\|_{L^2}^2 \ds\\
	&\le c_1\,\|u\|_{L^\infty([0,t],H^{1/2}(\Dom))}^2 \int_0^t \|\delta\tht^{(n-1)}(s)\|_{L^2}^2 \ds\\
	&\le c_1\,\|u\|_{L^\infty([0,t],H^{1/2}(\Dom))}^2 \int_0^t \|\gb\delta\tht^{(n-1)}(s)\|_{L^2}^2 \ds,
\end{aligned}\end{equation}
so (pathwise) convergence of $\tht^{(n)}$ in $L^2([0,t],H^1(\Dom))$ would follow from
\begin{equation}\label{q:idr}
   c_1\,\|u\|_{L^\infty(0,\infty;H^{1/2}(\Dom))}^2 = {8\pi^3 c_1}U^2\mxr^2/(2\beta+d+1) < 1.
\end{equation}

We now turn our attention to  $\vtt$, given by
\begin{equation}
   \vtt(t) = \theta^{(1)}(t) + \Delta^{-1}g
	= \int_0^t \ex^{(t-s)\Delta}u(s)\cdot\gb\Delta^{-1}g \ds,
\end{equation}
and whose Fourier coefficients satisfy (taking the general {\em deterministic\/} $g$)
\begin{equation}\begin{aligned}
    \vtt_k(t) &= \int_0^t \ex^{(s-t)|k|^2}\tssum_{j}'\, |k-j|^\beta[\xi_{kj}V_{k-j}(s) + \upsilon_{kj}W_{k-j}(s)]\gamma_{j} \ds\\
    &= \tssum_{j}'\,|k-j|^\beta\gamma_{j} \int_0^t \ex^{(s-t)|k|^2}[U_e \xi_{kj}V_{k-j}(s) + U_f \upsilon_{kj}W_{k-j}(s)] \ds
\end{aligned}\end{equation}
where, as in the proof of Theorem~\ref{t:static}, $\xi_{kj}:=e_{k-j}\cdot j$ and $\upsilon_{kj}:=f_{k-j}\cdot j$.
Since $\EV V_j(s)\overline{W_k(r)}=0$, for clarity we put $U_f=0$ temporarily, restoring it in \eqref{q:aux35}.
We compute
\begin{equation}
  \EV\vtt_k(t)\overline{\vtt_k(t)}
	= U_e^2\, \tssum_{ij}'\,|k-i|^\beta|k-j|^\beta\gamma_j\overline{\gamma_i}\;\EV \int_0^t \{\cdots\}_{j}\ds \int_0^t \overline{\{\cdots\}}_{i}\dr.
\end{equation}
Now
\begin{equation}\begin{aligned}
   \EV \int_0^t \{\cdots\}_{j}\ds \int_0^t \overline{\{\cdots\}}_{i}\dr
	&= \int_0^t\!\!\int_0^t \ex^{(s+r-2t)|k|^2}\EV V_{k-j}(s)\overline{V_{k-i}(r)} \dr\ds\\
	&= \int_0^t\!\!\int_0^t \ex^{(s+r-2t)|k|^2} \Phi_{k-j}(s-r)\delta_{ij} \dr\ds.
\end{aligned}\end{equation}
As in the static case, the sum over $i,j$ then collapses to one over $j$:
\begin{equation}
   \EV|\vtt_k(t)|^2
	= U_e^2\,\tssum_{j}' |k-j|^{2\beta}\xi_{kj}^2 |\gamma_j|^2 \int_0^t\!\!\int_0^t \ex^{(s+r-2t)|k|^2} \Phi_{k-j}(s-r) \dr\ds.
\end{equation}
We note that, except for the time integrals, the sum is exactly that in \eqref{q:evttk}, having {\em not\/} assumed stochastic $g$.
Restoring $U_f$, we have
\intomargin
\begin{equation}\label{q:aux35}
   \EV|\vtt_k(t)|^2 = \tssum_{j}' |k-j|^{2\beta}(U_e^2\xi_{kj}^2+U_f^2\upsilon_{kj}^2) |\gamma_j|^2 \int_0^t\!\!\int_0^t \ex^{(s+r-2t)|k|^2} \Phi_{k-j}(s-r) \dr\ds.
\end{equation}

To handle the integrals, we recall the following result proved in \cite[(3.20)--(3.26)]{jolly-dw:bht}: with $\Phi$ as in \eqref{q:Phi},
\begin{equation}\label{q:intlem}\begin{aligned}
    |k|^4\lim_{t\to\infty}\int_0^t\int_0^t &\ex^{(s+r-2t)|k|^2}\Phi(\chi|s-r|) \ds\dr\\
    &= 1 + \frac{\chi}{|k|^2}\Phi'(0) + \cdots + \frac{\chi^{{n-1}}}{|k|^{2n}}\int_0^\infty \ex^{-s|k|^2/\chi}\Phi^{(n)}(s)\ds.
\end{aligned}\end{equation}
We note that there is a spurious factor of $\sfrac12$ in the last line of (3.24) in \cite{jolly-dw:bht}, but (3.25) which we used here is correct.

Using this in \eqref{q:aux35}, we thus have
\intomargin
\begin{equation}
  \lim_{t\to\infty}\EV|\vtt_k(t)|^2 = |k|^{-4}\tssum_j'\,|k-j|^{2\beta}|\gamma_j|^2(U_e^2\xi_{kj}^2 + U_f^2\upsilon_{kj}^2) \{1 + \chi_{k-j}^{}\Phi'(0)/|k|^2 + \cdots\}.
\end{equation}
The first term (the 1) of the bracket, being exactly \eqref{q:evttk}, recovers the static case.
The higher-order terms, as in \eqref{q:intlem}, give smaller remainders.
\end{proof}

\bigskip\hbox to\hsize{\qquad\hrulefill\qquad}\medskip

\nocite{vallis:aofd}

\bibliographystyle{siam}
\bibliography{all,dw}

\bigskip\hbox to\hsize{\qquad\hrulefill\qquad}\medskip

\end{document}